\documentclass[preprint,12pt]{elsarticle}

\usepackage{amssymb}
\usepackage{url}
\usepackage{amsthm}
\usepackage{amsmath}

\newtheorem{dfn}{Definition}
\newtheorem{lem}{Lemma}

\newtheorem{thm}{Theorem}
\newtheorem{prp}{Property}
\newtheorem{pro}{Proposition}
\newtheorem{cor}{Corollary}

\journal{}

\begin{document}

\begin{frontmatter}

\title{A new lower bound for doubly metric dimension and related extremal differences}

\author{Jozef Kratica \fnref{mi}}
\ead{jkratica@mi.sanu.ac.rs}
\author{Vera Kova\v{c}evi\'c-Vuj\v{c}i\'c \fnref{fon}}
\ead{vera.vujcic@alumni.fon.bg.ac.rs}
\author{Mirjana \v{C}angalovi\'c \fnref{fon}}
\ead{mirjana.cangalovic@alumni.fon.bg.ac.rs}

 \address[mi]{Mathematical Institute, Serbian Academy of Sciences and Arts, Kneza Mihaila 36/III, 11 000 Belgrade, Serbia}

 \address[fon]{Faculty of Organizational Sciences, University of  Belgrade, Jove Ili\'ca 154, 11000 Belgrade, Serbia}   

\begin{abstract}
In this paper a new graph invariant based on the minimal hitting set problem is introduced.
It is shown that it represents a tight lower bound for the doubly metric dimension of 
a graph. Exact values of new invariant for paths, stars, complete graphs and
complete bipartite graph are obtained. The paper analyzes some tight bounds 
for the new invariant in general case. Also several extremal differences 
between some related invariants are determined.
\end{abstract}

\begin{keyword}
doubly metric dimension, hitting set problem, metric dimension, extremal differences. 
\end{keyword}

\end{frontmatter}


\section{Introduction}

The metric dimension problem was introduced independently by Slater in 1975 \cite{metd1} and 
Harary and Melter in 1976 \cite{metd2}. Given a simple connected undirected graph $G = (V,E)$,
$d(u,v)$ denotes the distance between vertices $u$ and $v$, i.e. the length of a shortest $u-v$ path.
Vertex $w \in V(G)$ of graph $G$ is said to resolve two vertices $u,v \in V(G)$ if $d(u,w) \ne d(v,w)$.

Set $S \subseteq V(G)$ is a resolving set of $G$ if any pair of distinct vertices of $G$ are resolved by some vertex from $S$.
A metric basis of $G$ is a resolving set of minimal cardinality. The metric dimension of $G$, denoted 
by $\beta(G)$, is the cardinality of its metric basis. The problem of computing the metric dimension of
an arbitrary graph is NP-hard \cite{metdnp}. 

In 2007, Caceres et al. \cite{cac07} defined the notion of a doubly resolving set as follows.
Vertices $x,y \in V(G)$ are said to doubly resolve vertices $u,v \in V(G)$ if $d(u,x)-d(u,y) \ne d(v,x)-d(v,y)$. Set $D \subseteq V(G)$
is a doubly resolving set of $G$ if every two distinct vertices of $G$ are doubly resolved by some two vertices of $D$. The doubly metric dimension
of $G$, denoted by $\psi(G)$, is the minimal cardinality of doubly resolving sets of $G$. The problem of finding the doubly metric dimension of an arbitrary
graph $G$ is also NP-hard \cite{kra09}. 

The concepts of edge and mixed metric dimensions were introduced by Kelenc et. al \cite{edim1,mdim1}. 
For the sake of simplicity, edge $e \in E(G)$ with endpoints $u$ and $v$ will be denoted by $e=uv$.
The distance between edge $e=uv \in E(G)$ and vertex $w \in V(G)$, denoted by $d(e,w)$, is defined as $d(e,w) = min \{ d(u,w), \, d(v,w)\}$. 
Vertex $w$ resolves two edges $e_1$ and $e_2$ if $d(e_1,w) \ne d(e_2,w)$. Set  $N \subseteq V(G)$ is an edge resolving set if 
for any pair of edges from $E(G)$ there is some vertex in $N$ that resolves them. The edge metric dimension of $G$, denoted by
$\beta_E(G)$, is the minimum cardinality of edge resolving sets of $G$. 

Finally, the concept of mixed metric dimension unifies metric and edge metric dimensions in the following way. Vertex $w$
resolves two items $a,b \in V(G) \bigcup E(G)$ if $d(a,w) \ne d(b,w)$. Mixed resolving set $M \subseteq V(G)$ is defined as a set 
such that for any pair of items from $V(G) \bigcup E(G)$ there is some vertex in $M$ that resolves them. Following earlier definitions, 
the mixed metric dimension of $G$, denoted by $\beta_M(G)$ is defined as the minimal cardinality of mixed resolving sets of $G$.
Finding $\beta_E(G)$ and $\beta_M(G)$ are both NP-hard in general case \cite{edim1,mdim1}.

In the sequel the following definitions will be used. 
Let $\Delta(G)$ denote the maximal degree of vertices in graph $G$. 
Additionally, $N(v) = \{w \in V(G) | vw \in E(G)\}$ and $N[v] = \{v\} \bigcup N(v)$
are open and closed neighbourhood of $v$, respectively.

\begin{dfn} \label{wset} \mbox{\rm(\cite{bal09})} For an arbitrary edge $e = uv \in E(G)$ 
$$W_{uv}=\{w \in V(G)| d(u,w)<d(v,w)\}$$
and 
$$W_{vu}=\{w \in V(G)| d(v,w)<d(u,w)\}$$
\end{dfn}

Complements of sets $W_{uv}$ and $W_{vu}$, denoted by $\overline{W_{uv}}$ and $\overline{W_{vu}}$,
are the following sets:
$\overline{W_{uv}}=\{w \in V(G)| d(u,w) \ge d(v,w)\}$
and 
$\overline{W_{vu}}=\{w \in V(G)| d(v,w) \ge d(u,w)\}$.  \\

The set of vertices on equal distances from $u$ and $v$ is denoted in the literature by $_u{W_v}$ (\cite{bal09}).
Note that $_u{W_v} = \overline{W_{uv}} \setminus W_{vu} = \overline{W_{vu}} \setminus W_{uv}$.
Also, $\overline{W_{uv}} \bigcup \overline{W_{vu}} = V(G)$.

The following definition introduces the well-known concept of hitting sets \cite{hit}.

\begin{dfn} \label{hs1} For a given set $S$ and family $\mathop{F} =\{S_1,...,S_k\}$, $S_i \subseteq S$, $\bigcup \limits_{i = 1}^k S_i=S$,   
a hitting set $H \subseteq S$ of family $F$ is a set which has a non-empty intersection with each $S_i$,
i.e. $(\forall i \in \{1,...,k\}) H \bigcap S_i \ne \emptyset$. 
\end{dfn}

The minimal hitting set problem is to find a hitting set of the minimal cardinality. This problem is equivalent to the set covering problem and is known to be NP-hard \cite{np}.

\begin{dfn} \mbox{\rm(\cite{mil21})} \label{hsl} Value $mhs_<(G)$ is defined to be the minimal cardinality of hitting sets of family $\{W_{uv}, W_{vu} | uv \in E(G) \}$.\end{dfn}

It is easy to see that the following property holds:

\begin{prp} \label{mhsg2} $mhs_<(G) \ge 2$.\end{prp}
\begin{proof} Let $uv$ be an arbitrary edge of $G$. Since $W_{uv} \cap W_{vu} = \emptyset$, a hitting set of $\{W_{uv}, W_{vu} \}$
has at least two elements, one from $W_{uv}$ and one from $W_{vu}$. Therefore, a hitting set of family 
$\{W_{uv}, W_{vu} | uv \in E(G) \}$ also has at least two elements. By Definition \ref{hsl},
it follows that $mhs_<(G) \ge 2$.
\end{proof}

The following proposition was proved in \cite{mil21}:

\begin{pro} \label{mmetd} \mbox{\rm(\cite{mil21})} Let $G$ be a connected graph, $uv\in E(G)$ an arbitrary edge and $M$ a mixed resolving set of $G$. Then
 \begin{itemize}
  \item [(i)] $W_{uv}\cap M\neq \emptyset;$
   \item [(ii)] $W_{vu}\cap M\neq \emptyset$.
  \end{itemize}
\end{pro} 

\begin{cor} \label{mmetd2} \mbox{\rm(\cite{mil21})} For every connected graph $G$ it holds $\beta_M(G) \ge mhs_<(G)$.
\end{cor}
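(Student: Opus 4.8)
The plan is to show that every minimum mixed resolving set of $G$ is itself a hitting set of the family $\{W_{uv}, W_{vu} \mid uv \in E(G)\}$; once that is established, the inequality $mhs_<(G) \le \beta_M(G)$ drops out immediately from the minimality in Definition \ref{hsl}. So the whole argument is essentially a reinterpretation of Proposition \ref{mmetd}, with no computation involved.

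Concretely, I would first fix a mixed resolving set $M$ of $G$ realizing the mixed metric dimension, i.e. $|M| = \beta_M(G)$. The key tool is Proposition \ref{mmetd}: it asserts that for every edge $uv \in E(G)$ both $W_{uv} \cap M \ne \emptyset$ and $W_{vu} \cap M \ne \emptyset$ hold. Reading these two conditions over all edges simultaneously says exactly that $M$ meets every member of the family $\{W_{uv}, W_{vu} \mid uv \in E(G)\}$, which is the defining property of a hitting set in Definition \ref{hs1}. Since $mhs_<(G)$ is by Definition \ref{hsl} the minimal cardinality taken over all hitting sets of this family, any single hitting set yields an upper bound, giving $mhs_<(G) \le |M| = \beta_M(G)$.

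The one point worth checking — and really the only place where anything could go wrong — is the ground-set requirement in Definition \ref{hs1}, namely that a hitting set $H$ must satisfy $H \subseteq S$ where $S = \bigcup_i S_i$. Here the ground set is $\bigcup_{uv \in E(G)} (W_{uv} \cup W_{vu})$, and I would verify that this equals $V(G)$: for any vertex $w$, choosing an incident edge $wx$ (which exists since $G$ is connected with at least one edge) gives $d(w,w) = 0 < d(x,w)$, so $w \in W_{wx}$. Hence every vertex lies in some member of the family, the ground set is all of $V(G)$, and the mixed resolving set $M \subseteq V(G)$ is a legitimate hitting set. I expect no genuine obstacle beyond this bookkeeping; the substantive content is entirely carried by Proposition \ref{mmetd}, and the corollary is a direct translation of that result into the language of hitting sets.
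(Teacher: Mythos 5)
Your proof is correct and follows exactly the route the paper intends: Proposition \ref{mmetd} shows every mixed resolving set is a hitting set of the family $\{W_{uv}, W_{vu} \mid uv \in E(G)\}$, so the minimality in Definition \ref{hsl} gives $mhs_<(G) \le \beta_M(G)$ (this is the same argument the paper spells out for the analogous Corollary \ref{doub2}). Your extra check that the ground set $\bigcup_{uv \in E(G)} (W_{uv} \cup W_{vu})$ equals $V(G)$ is careful bookkeeping the paper omits, but it does not change the substance.
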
 

In the sequel we will use the following results about $\psi(G)$, $\beta_E(G)$ and $\beta_M(G)$.

\begin{pro} \mbox{\rm(\cite{cac07})} \label{dou1a} Let $G$ be any graph of order $n$, then $2 \le \psi(G) \le n-1$.\end{pro}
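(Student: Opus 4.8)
The plan is to prove the two inequalities separately. For the lower bound $\psi(G) \ge 2$, I would observe that any doubly resolving set must contain at least two vertices: if $D = \{x\}$, then the only available pair is $x, x$, and $d(u,x) - d(u,x) = 0 = d(v,x) - d(v,x)$ for all $u, v \in V(G)$, so no pair of distinct vertices is doubly resolved. Hence $|D| \ge 2$ for every doubly resolving set, and $\psi(G) \ge 2$.

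For the upper bound the idea is to exhibit a doubly resolving set of cardinality $n-1$, namely $D = V(G) \setminus \{z\}$ for a carefully chosen vertex $z$. It is convenient to first reformulate the defining condition: vertices $x, y$ doubly resolve $u, v$ precisely when $d(u,x) - d(v,x) \ne d(u,y) - d(v,y)$, so $D$ doubly resolves $u, v$ if and only if the function $g_{u,v}(w) = d(u,w) - d(v,w)$ is non-constant on $D$. With this formulation, pairs $u, v$ with $u, v \in D$ are handled at once: evaluating $g_{u,v}$ at $w = u$ and $w = v$ gives $-d(u,v)$ and $d(u,v)$, which differ since $u \ne v$. The only pairs requiring care are those of the form $(z, v)$ with $v \ne z$.

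To treat the pairs $(z,v)$, I would choose $z$ to be a vertex of degree at least two (such a vertex exists when $n \ge 3$, since a connected graph on at least three vertices cannot have all degrees equal to one) and argue by contradiction that $g_{z,v}$ cannot be constant on $D$. Assuming $g_{z,v} \equiv c$, evaluation at $w = v$ forces $c = d(z,v)$. If $k = d(z,v) \ge 2$, then evaluating at the neighbour $p_1$ of $z$ on a shortest $z$--$v$ path (which lies in $D$ and is distinct from $v$) yields $g_{z,v}(p_1) = 1 - (k-1) = 2 - k \ne k$, a contradiction. If instead $d(z,v) = 1$, constancy means $d(z,w) = d(v,w) + 1$ for every $w \ne z$; applying this to any second neighbour $v'$ of $z$ gives $1 = d(v,v') + 1$, i.e. $v' = v$, contradicting $\deg(z) \ge 2$.

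The main obstacle is exactly this last case $d(z,v) = 1$: when $v$ is adjacent to the removed vertex $z$, the natural step-toward-$z$ trick fails because that step lands on $z$ itself. The key observation that resolves it is that constancy of $g_{z,v}$ in this case forces $z$ to have a unique neighbour, i.e.\ to be a leaf; choosing $z$ of degree at least two therefore rules the obstruction out. Combining all cases shows that $V(G) \setminus \{z\}$ is doubly resolving, whence $\psi(G) \le n-1$. The small cases $n \le 2$ are degenerate and can be checked directly.
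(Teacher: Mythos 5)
This proposition is quoted by the paper from \cite{cac07} and is stated without proof, so there is no in-paper argument to compare yours against; I can only assess it on its own merits, and on those merits it is correct in the regime $n\ge 3$, which is the only regime in which the paper ever invokes the bound. Your lower-bound argument is sound: a singleton $\{x\}$ offers only the pair $x,x$, whose difference function is identically zero, so no two distinct vertices are doubly resolved. For the upper bound, the reformulation that $D$ doubly resolves $u,v$ if and only if $w \mapsto d(u,w)-d(v,w)$ is non-constant on $D$ is exactly the right lens; pairs with both vertices in $D$ are dispatched by evaluating at $u$ and at $v$, and your choice of a vertex $z$ of degree at least two (which exists in any connected graph of order $n\ge 3$) correctly handles both remaining cases: for $d(z,v)\ge 2$ the neighbour of $z$ on a shortest $z$--$v$ path gives the value $2-d(z,v)\ne d(z,v)$, and for $d(z,v)=1$ constancy would force a second neighbour of $z$ to coincide with $v$. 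One caveat: your closing remark that the cases $n\le 2$ ``can be checked directly'' is not right, because for $n=2$ the stated inequality is in fact false --- $\psi(P_2)=2>n-1$, consistent with the paper's own Proposition~\ref{dou2a}, which gives $\psi(K_n)=\max\{2,n-1\}$. This is an imprecision inherited from the statement itself (it implicitly assumes $n\ge 3$) rather than a defect of your argument, but you should flag that the upper bound fails at $n=2$ rather than suggest it can be verified there.
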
 

\begin{pro} \mbox{\rm(\cite{cac07})} \label{dou2a} Let $K_n$ be a complete graph of order $n$, then $\psi(K_n) = max\{2,n-1\}$.\end{pro}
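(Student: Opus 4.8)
The plan is to pin down $\psi(K_n)$ by matching lower and upper bounds, exploiting the fact that in $K_n$ every two distinct vertices lie at distance exactly $1$; thus for any vertices $w$ and $x$ we have $d(w,x)=0$ if $w=x$ and $d(w,x)=1$ otherwise. Throughout I would use the reformulation that a set $D$ doubly resolves a pair $u,v$ exactly when the map $x \mapsto d(u,x)-d(v,x)$, restricted to $x\in D$, is non-constant (this is just the rearrangement $d(u,x)-d(u,y)\neq d(v,x)-d(v,y) \iff d(u,x)-d(v,x)\neq d(u,y)-d(v,y)$).

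For the lower bound I would observe that if two distinct vertices $u$ and $v$ both lie outside $D$, then $d(u,x)=d(v,x)=1$ for every $x\in D$, so the difference map is identically zero and the pair is \emph{not} doubly resolved. Hence any doubly resolving set omits at most one vertex, forcing $|D|\ge n-1$. Combined with the universal bound $\psi(G)\ge 2$ from Proposition \ref{dou1a}, this gives $\psi(K_n)\ge \max\{2,n-1\}$.

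For the upper bound I would exhibit a doubly resolving set of the claimed size. When $n\ge 3$, take $D=V(K_n)\setminus\{z\}$ for an arbitrary vertex $z$, so $|D|=n-1=\max\{2,n-1\}$. To verify that $D$ is doubly resolving I would split a pair of distinct vertices $u,v$ into two cases: both in $D$, and one of them equal to $z$. In the first case, writing $u=x_i$ and $v=x_j$ with $i\neq j$, the difference map equals $-1$ at $x_i$ and $+1$ at $x_j$; in the second case, with $v=z\notin D$ and $u=x_i\in D$, it equals $-1$ at $x_i$ and $0$ at every other $x_k\in D$. When $n=2$, take $D=V(K_2)$, of cardinality $2=\max\{2,n-1\}$, which trivially doubly resolves its unique pair of vertices.

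The conceptual core is the lower-bound observation that two vertices outside $D$ share the same distance profile and so are never doubly resolved; the case analysis in the upper bound is routine but essential. I expect the only point needing care is the second case of the upper bound ($v=z\notin D$), where the difference map must take two distinct values over $D$ — this is precisely where $|D|\ge 2$ (equivalently $n\ge 3$) is used to guarantee the existence of a coordinate $x_k$ with $k\neq i$.
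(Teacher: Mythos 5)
The paper itself never proves this proposition: it is imported verbatim from \cite{cac07} (Proposition \ref{dou2a}) and used as a black box, e.g.\ in Theorem \ref{psimhs1}(ii). So there is no internal proof to compare against; what you have done is supply the missing argument, and your proof is correct. The reformulation of double resolution as non-constancy of the map $x \mapsto d(u,x)-d(v,x)$ on $D$ is a valid rearrangement of the definition, and it streamlines both halves. The lower bound is the conceptual heart: in $K_n$ any two vertices $u \neq v$ outside $D$ satisfy $d(u,x)=d(v,x)=1$ for every $x \in D$, so the difference map is identically $0$ and the pair is unresolved; hence $|D| \ge n-1$, and together with $\psi(G) \ge 2$ this gives $\psi(K_n) \ge \max\{2,n-1\}$. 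Your upper bound is also sound: for $n \ge 3$ the set $D = V(K_n)\setminus\{z\}$ works, since a pair inside $D$ produces values $-1$ and $+1$, while a pair $\{x_i, z\}$ produces $-1$ at $x_i$ and $0$ at any other $x_k \in D$ --- and you correctly flag that the existence of such an $x_k$ is exactly where $n \ge 3$ enters; the degenerate case $n=2$ is handled separately by $D = V(K_2)$. This is essentially the standard argument from the original source (pairs of vertices with identical distance profiles force near-universal membership in $D$), so your proof fills the citation with what is, in substance, the expected proof, done cleanly and with the boundary case $n=2$ treated explicitly --- something the paper's own surrounding statements (e.g.\ Proposition \ref{dou1a}, which fails literally at $n=2$) gloss over.
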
 

\begin{pro} \mbox{\rm(\cite{cac07})} \label{tree} Let $T$ be any tree with $l(T)$ leaves, then $\psi(T) = l(T)$.\end{pro}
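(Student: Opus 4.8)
The plan is to establish the two inequalities $\psi(T)\ge l(T)$ and $\psi(T)\le l(T)$ separately, using throughout the reformulation that two vertices $x,y$ doubly resolve $u,v$ precisely when $d(u,x)-d(v,x)\ne d(u,y)-d(v,y)$. For the lower bound I would first show that every leaf of $T$ must belong to every doubly resolving set $D$. Fix a leaf $\ell$ with unique neighbour $w$. For any vertex $x\ne\ell$ the unique path from $\ell$ to $x$ passes through $w$, so $d(\ell,x)=d(w,x)+1$. Hence, if $\ell\notin D$, then for every pair $x,y\in D$ we get $d(\ell,x)-d(\ell,y)=(d(w,x)+1)-(d(w,y)+1)=d(w,x)-d(w,y)$, so no pair from $D$ doubly resolves $\ell$ and $w$, contradicting that $D$ is doubly resolving. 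Therefore $D$ contains all $l(T)$ leaves and $\psi(T)\ge l(T)$.

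For the upper bound I would prove that the set $L$ of all leaves of $T$ is itself a doubly resolving set, which at once yields $\psi(T)\le|L|=l(T)$. The key fact is how distance differences behave along a path in a tree. Given distinct vertices $u,v$, let $P$ be the path joining them and, for a leaf $x$, let $p(x)$ be the vertex of $P$ closest to $x$, its projection. Since in a tree the paths from $x$ to $u$ and from $x$ to $v$ both meet $P$ at $p(x)$, we have $d(u,x)=d(u,p(x))+d(p(x),x)$ and $d(v,x)=d(v,p(x))+d(p(x),x)$, so that $d(u,x)-d(v,x)=d(u,p(x))-d(v,p(x))$ depends only on the projection $p(x)$. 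It therefore suffices to exhibit two leaves whose projections onto $P$ differ.

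To complete this, I would remove the edges of $P$ from $T$, splitting it into subtrees $T_0,\dots,T_k$, where $T_i$ is the component containing the $i$-th vertex $p_i$ of $P$ (with $p_0=u$ and $p_k=v$); every leaf of $T$ lying in $T_i$ then projects to $p_i$. Consider $T_0$: if $T_0=\{u\}$ then $u$ is already a leaf of $T$ projecting to $u$; otherwise $T_0$ has at least two vertices, hence a leaf distinct from $u$, and such a vertex has all of its $T$-neighbours inside $T_0$, so it is a leaf of $T$ projecting to $u$. Applying the same reasoning to $T_k$ produces a leaf of $T$ projecting to $v$. These two leaves $x,y$ satisfy $d(u,x)-d(v,x)=-d(u,v)$ and $d(u,y)-d(v,y)=d(u,v)$, which are distinct since $d(u,v)\ge 1$; thus they doubly resolve $u,v$. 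I expect the main obstacle to be exactly this existence step --- guaranteeing two leaves with distinct projections --- because the projection identity reduces the whole statement to it, and handling the endpoints $u,v$ correctly requires the component decomposition so that genuine leaves of $T$ (rather than interior vertices of $P$) realize the extreme projection values.
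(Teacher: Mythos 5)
You should first note a structural point: the paper does not prove this proposition at all --- it is quoted from \cite{cac07} as a known result, so there is no internal proof to compare yours against. Judged on its own merits, your argument is correct and complete. The lower bound is right: for a leaf $\ell$ with support vertex $w$, the identity $d(\ell,x)=d(w,x)+1$ for all $x\ne\ell$ shows that any set omitting $\ell$ gives $d(\ell,x)-d(\ell,y)=d(w,x)-d(w,y)$ for all pairs $x,y$ in the set, so the pair $\ell,w$ is never doubly resolved; hence every leaf lies in every doubly resolving set and $\psi(T)\ge l(T)$. The upper bound is also sound: your projection identity $d(u,x)-d(v,x)=d(u,p(x))-d(v,p(x))$ correctly reduces double resolution of $u,v$ to exhibiting two leaves with distinct projections onto the $u$--$v$ path $P$, and your decomposition of $T$ into the components $T_0,\dots,T_k$ obtained by deleting the edges of $P$ handles the one delicate point --- that the extreme projections $u$ and $v$ are realized by genuine leaves of $T$: a leaf of $T_0$ other than $u$ has all its $T$-neighbours in $T_0$ (it is off $P$, so no $P$-edge is incident to it) and is thus a leaf of $T$, while in the degenerate case $T_0=\{u\}$ the vertex $u$ itself has degree one in $T$ because only one edge of $P$ meets the endpoint $u$. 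The resulting values $-d(u,v)$ and $d(u,v)$ differ since $u\ne v$, so the leaves form a doubly resolving set and $\psi(T)\le l(T)$. The only implicit hypothesis is that $T$ has at least two vertices (so that $l(T)\ge 2$ and $\psi$ is defined), which is the setting of the paper anyway. Your proof is a legitimate self-contained replacement for the citation, and it follows the natural route one would expect (forced leaves plus gate/projection argument), so nothing in the paper's use of the proposition --- e.g., Corollary \ref{psit} or the tree construction in Theorem \ref{dedge} --- would be affected by adopting it.
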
 

\begin{cor} \label{psit} For any path $P_n$ it holds $\psi(P_n) = 2$.
\end{cor}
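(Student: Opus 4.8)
The plan is to recognize this as an immediate special case of Proposition \ref{tree}, so that the whole argument reduces to identifying the structure of $P_n$ and counting its leaves. First I would observe that every path $P_n$ (with $n \ge 2$) is a connected acyclic graph, hence a tree; therefore Proposition \ref{tree} applies directly and gives $\psi(P_n) = l(P_n)$.

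Next I would determine $l(P_n)$ explicitly. In a path exactly the two endpoints have degree $1$, while every internal vertex has degree $2$, so the number of leaves is precisely $2$, independently of $n$. Substituting $l(P_n) = 2$ into the formula supplied by Proposition \ref{tree} then yields $\psi(P_n) = 2$, which is exactly the claim. For an internal consistency check one may note that this is compatible with the general bounds of Proposition \ref{dou1a}, since $\psi(P_n) = 2$ satisfies $2 \le \psi(P_n) \le n-1$ whenever $n \ge 3$.

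I do not expect any genuine obstacle, as the statement is essentially a one-line corollary of Proposition \ref{tree}. The only point deserving a word of care is the degenerate range: for $n = 2$ the path is a single edge whose two vertices are both leaves, so $l(P_2) = 2$ still holds and the formula remains valid, while for $n = 1$ the notion of doubly metric dimension is vacuous, so the statement is to be read for $n \ge 2$.
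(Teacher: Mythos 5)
Your proposal is correct and matches the paper's own reasoning: the paper states this as an immediate corollary of Proposition \ref{tree}, exactly as you derive it by noting that $P_n$ is a tree with precisely two leaves, so $\psi(P_n) = l(P_n) = 2$. Your remarks on the degenerate cases $n=2$ and $n=1$ are sound but not needed beyond what the paper implicitly assumes.
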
 

\begin{pro} \mbox{\rm(\cite{cac07})} \label{dcyc} For any cycle $C_n$, $n \ge 3$, it holds $$\psi(C_n) = \begin{cases}
2, & n \, is \, odd \\
3, & n \, is \, even\
\end{cases}$$
 \end{pro}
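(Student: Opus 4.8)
The plan is to prove the two parities separately, in each case combining the universal lower bound $\psi(C_n)\ge 2$ (Proposition \ref{dou1a}) with an explicit construction and, for even $n$, an extra lower bound argument. Throughout I label the vertices of $C_n$ by $\mathbb{Z}_n=\{0,1,\dots,n-1\}$, so that $d(i,j)=\min(r,n-r)$ with $r=(i-j)\bmod n$. For a two-element candidate set $\{x,y\}$ the only available pair is $x,y$ itself, and the doubly resolving condition for a pair $u,v$ reduces to $f(u)\ne f(v)$, where $f(w)=d(w,x)-d(w,y)$ (the condition is symmetric under swapping $x$ and $y$). Hence a size-two set is doubly resolving exactly when $f$ is injective on $V(C_n)$.

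For odd $n=2k+1$ I would exhibit a doubly resolving pair of diametral vertices, namely $\{0,k\}$, and verify that $f(w)=d(w,0)-d(w,k)$ is injective. Splitting $w$ into the arcs $\{0,\dots,k\}$ and $\{k+1,\dots,2k\}$, a direct computation gives $f(w)=2w-k$ on the first arc and $f(w)=k+1-2(w-k)$ on the second; the first family consists of integers of one fixed parity and the second of integers of the opposite parity, and each family is strictly monotone in $w$, so all $2k+1$ values are distinct. This yields $\psi(C_{2k+1})\le 2$, and with Proposition \ref{dou1a} the equality $\psi(C_{2k+1})=2$ follows.

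For even $n=2k$ the key structural fact is the antipodal identity $d(w+k,z)=k-d(w,z)$, valid for all $w,z$ (the vertex $w+k$ is diametrically opposite $w$); I would prove it by a short case analysis on $(w-z)\bmod 2k$. The lower bound $\psi(C_{2k})\ge 3$ is then the heart of the argument. Suppose some pair $\{x,y\}$ were doubly resolving, i.e. $f$ injective. The identity gives $f(w+k)=-f(w)$, while the triangle inequality confines $f$ to the $2d(x,y)+1$ integers of $\{-d(x,y),\dots,d(x,y)\}$; injectivity over $2k$ vertices forces $2d(x,y)+1\ge 2k$, hence $d(x,y)=k$, so $x,y$ must be antipodal. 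But then $f(w)=2d(w,x)-k$, and since $d(w,x)$ attains each value in $\{1,\dots,k-1\}$ twice, $f$ cannot be injective, a contradiction. For the matching upper bound I would take $D=\{0,1,k\}$ and show the reduced map $w\mapsto(d(w,1)-d(w,0),\,d(w,k)-d(w,0))$ is injective: the first coordinate equals $-1$ on the arc $\{1,\dots,k\}$ and $+1$ on $\{0,k+1,\dots,2k-1\}$, separating the two arcs, while on each arc $d(w,0)$ is injective, so the second coordinate $k-2d(w,0)$ separates vertices inside an arc. This gives $\psi(C_{2k})\le 3$, and combined with the lower bound, $\psi(C_{2k})=3$.

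I expect the even-case lower bound to be the main obstacle. The constructions for both parities are routine verifications once the right vertices are chosen, but ruling out every size-two set requires the combination of the antipodal symmetry $f(w+k)=-f(w)$ with the range restriction $|f|\le d(x,y)\le k$, first to force the pair to be diametral and then to defeat even that choice. The one subsidiary point to handle carefully is the reduction showing that a size-two doubly resolving set is governed entirely by injectivity of the single difference function $f$, since the definition of doubly resolving quantifies over pairs drawn from $D$.
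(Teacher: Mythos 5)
Your proposal is correct, but there is nothing in the paper to compare it against: Proposition \ref{dcyc} is stated as a known result imported from \cite{cac07}, and the paper gives no proof of it. What you have produced is a complete, self-contained proof. I checked the details: the reduction of the size-two case to injectivity of $f(w)=d(w,x)-d(w,y)$ is valid (a pair drawn from a two-element set is essentially unique, and swapping $x,y$ only negates the differences); the odd-case formulas $f(w)=2w-k$ and $f(w)=3k+1-2w$ on the two arcs are right, and the parity-plus-monotonicity argument does give injectivity; the antipodal identity $d(w+k,z)=k-d(w,z)$ on $C_{2k}$ holds (the case analysis you defer is routine); the counting bound $2d(x,y)+1\ge 2k$ forces $d(x,y)=k$, and the antipodal pair then fails because $f(w)=2d(w,x)-k$ repeats every value coming from $d(w,x)\in\{1,\dots,k-1\}$; and the verification that $\{0,1,k\}$ works uses the standard fact that pairwise differences over a set are determined by differences to a fixed base vertex, which you invoke implicitly and correctly. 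Two points you flagged as deferred (the antipodal identity and the base-point reduction for the three-element set) are the only places where detail is missing, and both are one-line arguments, so the proof stands. Your even-case lower bound is indeed the substantive part, and the symmetry-plus-range argument you use for it is an efficient way to rule out all $\binom{n}{2}$ candidate pairs at once rather than by case inspection.
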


\begin{pro} \mbox{\rm(\cite{edim1})} \label{edim1} For $n \ge 2$ it holds $\beta_E(P_n) = \beta(P_n) = 1$,
$\beta_E(C_n) = \beta(C_n) = 2$, $\beta_E(K_n) = \beta(K_n) = n-1$. Moreover, $\beta_E(G)=1$ if and only if $G$ is a path.
\end{pro}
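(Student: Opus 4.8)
The plan is to dispatch the three explicit families quickly, since in each case the same distance bookkeeping drives both the vertex and the edge version, and then to concentrate the real effort on the final characterization. Writing $P_n = v_1 v_2 \cdots v_n$, I would fix the endpoint $v_1$ and note that $d(v_1,v_i) = i-1$, so the distances to the vertices are the pairwise distinct values $0,1,\ldots,n-1$; hence $\{v_1\}$ resolves $P_n$ and $\beta(P_n)=1$ (it cannot be $0$ for $n\ge 2$). For the edges $e_i = v_i v_{i+1}$ one has $d(v_1,e_i)=\min(i-1,i)=i-1$, again pairwise distinct over $i=1,\ldots,n-1$, so $\{v_1\}$ is an edge resolving set and $\beta_E(P_n)=1$.

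For $K_n$ I would use that any two distinct vertices are at distance $1$. A vertex $w\notin\{u,v\}$ then has $d(u,w)=d(v,w)=1$, so only $u$ or $v$ itself resolves the pair $u,v$; consequently omitting two vertices leaves an unresolved pair, forcing $\beta(K_n)\ge n-1$, while $V(G)\setminus\{x\}$ plainly resolves, giving equality. The edge version runs analogously once one observes that $w$ resolves two edges exactly when it is an endpoint of precisely one of them: then $V(G)\setminus\{x\}$ still works (any two distinct edges differ in at least one endpoint other than $x$), whereas leaving out two vertices $x,y$ lets the edges $xz$ and $yz$ collide, so $\beta_E(K_n)=n-1$. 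For $C_n$ I would prove both bounds: since every vertex is incident to two edges, each at distance $0$ from it, no single vertex is an edge resolving set, and by vertex-transitivity the same holds in the metric setting, so both invariants are at least $2$; the matching upper bound comes from exhibiting two adjacent vertices and verifying, via the explicit distance formula on the cycle, that they resolve every pair of vertices and every pair of edges.

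The characterization $\beta_E(G)=1 \iff G$ is a path is the crux, and since the forward implication is already covered by the path computation, the content is the converse. The key step will be to show that a single edge resolving vertex forces $\Delta(G)\le 2$. Suppose $w$ resolves all edges and some vertex $x$ with $d(x,w)=t$ had three neighbours $a,b,c$. Because adjacent vertices differ in distance to $w$ by at most one, each incident edge has $d(\,\cdot\,,w)=\min\{t,d(\,\cdot\,,w)\}\in\{t-1,t\}$ (all equal to $0$ when $t=0$); by pigeonhole two of $xa,xb,xc$ then share their distance to $w$, contradicting that $w$ resolves them. Hence $\Delta(G)\le 2$, so the connected graph $G$ is a path or a cycle, and the cycle is excluded by the already-established $\beta_E(C_n)=2$.

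I expect this degree bound to be the main obstacle, chiefly in handling the boundary case $t=0$ (where $x=w$ and all incident edges sit at distance $0$) cleanly and in confirming that the pigeonhole argument covers every placement of the three neighbours across the adjacent distance levels. The remaining pieces are routine distance computations, so once the degree bound is secured the proof assembles directly from the facts established for paths and cycles.
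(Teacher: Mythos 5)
This proposition is one the paper does not prove at all: it is imported verbatim from \cite{edim1} as known background (the paper's only engagement with it is to quote and use it), so there is no internal proof to compare against. Your proposal, then, supplies a self-contained proof of a cited result, and it is correct. The path and complete-graph computations are sound (in $K_n$ the observation that a vertex resolves two edges exactly when it is an endpoint of precisely one of them is the right reduction, and the colliding pair $xz$, $yz$ gives the lower bound); the cycle lower bound via the two incident edges at distance $0$ is correct, though the matching upper bound --- that two \emph{adjacent} vertices resolve all vertex pairs and all edge pairs of $C_n$ --- is asserted rather than carried out; it does hold (a reflection argument about each of the two chosen vertices shows any collision would force $n \mid 2$), but that verification is the one piece you should actually write down, since antipodal pairs on even cycles famously fail. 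Your crux, the pigeonhole argument that a single resolving vertex $w$ forces $\Delta(G)\le 2$ (the three edges at a degree-$3$ vertex $x$ with $d(x,w)=t$ all have edge-distance in $\{t-1,t\}$, or all equal $0$ when $x=w$), is exactly right, and it is worth noting that it is precisely the $\Delta=3$ instance of the bound $\beta_E(G)\ge\lceil \log_2 \Delta(G)\rceil$ stated as Proposition~\ref{edim2} in this paper; indeed the paper itself invokes that proposition in the proof of Theorem~\ref{dedge} for the same purpose, to rule out $\beta_E(G)=1$ when $\Delta(G)\ge 3$. So your route matches the standard one in the literature: degree bound, then connectivity gives path or cycle, then the cycle is excluded by $\beta_E(C_n)=2$.
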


\begin{pro} \mbox{\rm(\cite{edim1})} \label{edim2} Let $G$ be a connected graph. Then $\beta_E(G) \geq \lceil log_2 \Delta(G) \rceil$
\end{pro}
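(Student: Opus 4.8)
The plan is to exploit a maximum-degree vertex together with a counting argument on distance vectors. Let $v \in V(G)$ be a vertex with $\deg(v) = \Delta(G)$, and let $e_1 = vu_1, e_2 = vu_2, \ldots, e_{\Delta(G)} = vu_{\Delta(G)}$ be the $\Delta(G)$ edges incident to $v$, where $u_1,\ldots,u_{\Delta(G)}$ are the neighbours of $v$. Take any edge resolving set $N = \{w_1,\ldots,w_k\}$ with $k = \beta_E(G)$. To each incident edge $e_i$ I would associate its distance vector $\bigl(d(e_i,w_1), d(e_i,w_2), \ldots, d(e_i,w_k)\bigr)$. Since $N$ resolves every pair of edges, these $\Delta(G)$ vectors must be pairwise distinct; the bound will follow by limiting how many distinct vectors are possible.

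The key step is to show that each coordinate $d(e_i,w_j)$ can take only two values as $i$ ranges over the incident edges, with $w_j$ fixed. Writing $d_j = d(v,w_j)$ and using $d(e_i,w_j) = \min\{d(v,w_j), d(u_i,w_j)\}$, the point is that $v$ is a common endpoint of all these edges. Because $u_i$ is adjacent to $v$, the triangle inequality gives $|d(u_i,w_j) - d_j| \le 1$, so $d(u_i,w_j) \in \{d_j - 1, d_j, d_j + 1\}$. Taking the minimum with $d_j$ then forces $d(e_i,w_j) \in \{d_j - 1, d_j\}$: the value $d_j + 1$ collapses to $d_j$ under the minimum, and the value $d_j - 1$ survives. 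Hence, for fixed $w_j$, exactly two outcomes are available independently of $i$.

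Consequently each of the $\Delta(G)$ distance vectors lies in a set of at most $2^k$ possibilities (two choices in each of the $k$ coordinates). Distinctness of the vectors yields $\Delta(G) \le 2^k$, and since $k$ is an integer this rearranges to $k \ge \lceil \log_2 \Delta(G) \rceil$, i.e. $\beta_E(G) \ge \lceil \log_2 \Delta(G)\rceil$.

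The only delicate point, and the one I would state most carefully, is the two-value claim in the second paragraph: it is precisely the fact that all the chosen edges share the endpoint $v$ that pins $d(e_i,w_j)$ to $\{d_j-1, d_j\}$ rather than to three or more values. Everything else is a clean information-theoretic count, so I expect no further obstacle beyond justifying that reduction via the adjacency of $u_i$ to $v$.
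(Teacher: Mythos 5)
Your proof is correct. The paper itself states this proposition without proof, citing \cite{edim1}; your argument --- using a maximum-degree vertex $v$, observing that for every incident edge $e_i=vu_i$ and every $w_j$ in the edge resolving set the adjacency of $u_i$ to $v$ forces $d(e_i,w_j)\in\{d(v,w_j)-1,\,d(v,w_j)\}$, and then counting the at most $2^k$ distinct distance vectors --- is precisely the standard argument from that cited source, so it matches the intended proof in both idea and execution.
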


\begin{pro} \label{mpath} \mbox{\rm(\cite{mdim1})} For a connected graph $G$, it holds $2 \leq \beta_M(G) \leq n$. Moreover, $\beta_M(G)=2$ if and only if $G$ is a path.
 \end{pro}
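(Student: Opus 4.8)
The plan is to dispose of the two numerical bounds quickly and then concentrate on the characterization, where the real content lies. For the lower bound $\beta_M(G)\ge 2$ I would note that a single vertex $w$ can never form a mixed resolving set: since $G$ is connected with $n\ge 2$, $w$ has an incident edge $e=wu$, and then $d(e,w)=0=d(w,w)$, so $w$ fails to resolve the vertex-item $w$ from the edge-item $e$. (Alternatively this is immediate from Corollary \ref{mmetd2} and Property \ref{mhsg2}.) For the upper bound I would verify that $V(G)$ itself is a mixed resolving set, whence $\beta_M(G)\le n$: two distinct vertices $u,v$ are resolved by $u$, since $d(u,u)=0<d(v,u)$, and for the remaining pairs (two distinct edges, or an edge and a vertex) the two items have distinct vertex sets, so some vertex $w$ lies on one item but not the other and is therefore at distance $0$ from one and at positive distance from the other.

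For the forward direction of the equivalence, suppose $G=P_n$ with consecutive vertices $v_1,\dots,v_n$, and I would test $M=\{v_1,v_n\}$. Encoding each item $x$ by the pair $(d(x,v_1),d(x,v_n))$, a vertex $v_i$ receives $(i-1,\,n-i)$ and an edge $v_jv_{j+1}$ receives $(j-1,\,n-j-1)$. The decisive observation is that the two coordinates of a vertex always sum to $n-1$, whereas those of an edge always sum to $n-2$; this separates every vertex from every edge at once. Among vertices the first coordinate is strictly increasing in $i$, and likewise among edges, so all items receive distinct pairs and $M$ resolves $G$. With the lower bound this yields $\beta_M(P_n)=2$.

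The crux is the reverse direction: if $\beta_M(G)=2$ then $G$ is a path. Fix a mixed resolving set $M=\{a,b\}$ and encode each item by $\phi(x)=(d(x,a),d(x,b))$, which is then injective on $V(G)\cup E(G)$. The key local constraint I would extract is that every edge $e=uv$ must be resolved from each of its own endpoints. Writing $p=d(u,a)-d(v,a)$ and $q=d(u,b)-d(v,b)$, each lying in $\{-1,0,1\}$ because $u,v$ are adjacent, one checks that $\phi(e)=\phi(u)$ exactly when $p\le 0$ and $q\le 0$, and $\phi(e)=\phi(v)$ exactly when $p\ge 0$ and $q\ge 0$. Forbidding both collisions, a short case analysis leaves only $(p,q)=(1,-1)$ and $(p,q)=(-1,1)$. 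In particular $p+q=0$ on every edge, so $d(\cdot,a)+d(\cdot,b)$ is invariant across edges and hence, by connectedness, constant on $V(G)$, necessarily equal to $d(a,b)$.

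This rigidity forces a path. Since $d(v,a)+d(v,b)=d(a,b)$ for every $v$, the pair $\phi(v)$ is determined by $d(v,a)$ alone, so injectivity on vertices means each value of $d(v,a)\in\{0,1,\dots,d(a,b)\}$ is attained by at most one vertex; a shortest $a$--$b$ path realizes every such value, giving exactly one vertex per level and $n=d(a,b)+1$. Because $|p|=1$ on every edge, edges join only consecutive levels $v_i,v_{i+1}$, of which at most one can exist for each $i$, and connectedness forces all of them to be present, so $G$ is precisely the path $v_0v_1\cdots v_{d(a,b)}$. I expect the main obstacle to be isolating and justifying the edge-versus-endpoint constraint of the third paragraph; once the invariance of $d(\cdot,a)+d(\cdot,b)$ is secured, reconstructing the path is routine.
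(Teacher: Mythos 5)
The paper never proves this proposition: it is imported verbatim from \cite{mdim1} as a background result, so there is no in-paper proof to measure your attempt against. Judged on its own, your argument is correct and self-contained. The two bounds are handled properly (a single vertex $w$ cannot separate itself from an incident edge $e=wu$ since $d(e,w)=0=d(w,w)$, and $V(G)$ is always a mixed resolving set because any two distinct items differ in their vertex sets); the computation for $P_n$ with $M=\{v_1,v_n\}$ via the coordinate sums $n-1$ versus $n-2$ is right; and the converse, which is the genuinely nontrivial part, is sound: requiring each edge $uv$ to be resolved from its \emph{own} endpoints forces $\bigl(d(u,a)-d(v,a),\,d(u,b)-d(v,b)\bigr)\in\{(1,-1),(-1,1)\}$, hence $d(\cdot,a)+d(\cdot,b)$ is constant on $V(G)$ and equal to $d(a,b)$, which together with injectivity of the distance vectors on vertices pins $G$ down to a single $a$--$b$ geodesic with one vertex per distance level --- exactly a path. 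This is a clean, direct argument; the proof in \cite{mdim1} reaches the same conclusion through separate structural lemmas (behaviour of leaves and high-degree vertices with respect to mixed resolving sets), so your route is arguably more economical for this specific statement. One hypothesis you should state explicitly is $n\ge 2$ (equivalently $E(G)\neq\emptyset$): the inequality $2\le\beta_M(G)\le n$ is false for $K_1$, and your incident-edge argument, like the proof of Property \ref{mhsg2} in this paper, needs at least one edge to exist; this assumption is implicit in the proposition as stated.
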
 

\begin{pro} \label{mbip} \mbox{\rm(\cite{mdim1})} For any complete bipartite graph $K_{r,t}$, $r,t \ge 2$, it holds $$\beta_M(K_{r,t}) = \begin{cases}
r+t-1, & r=2 \vee t=2 \\
r+t-2, & otherwise \\
\end{cases}$$
 \end{pro}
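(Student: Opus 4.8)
The plan is to work directly with the bipartition $A=\{a_1,\dots,a_r\}$, $B=\{b_1,\dots,b_t\}$ of $K_{r,t}$ and to exploit that distances take only the values $0,1,2$: two vertices in the same part are at distance $2$, two vertices in different parts are at distance $1$, and for an edge $e=a_ib_j$ one has $d(e,w)=0$ exactly when $w\in\{a_i,b_j\}$ and $d(e,w)=1$ otherwise. First I would record, from Definition \ref{wset}, the sets $W_{a_ib_j}=\{a_i\}\cup(B\setminus\{b_j\})$ and $W_{b_ja_i}=\{b_j\}\cup(A\setminus\{a_i\})$, since these drive both the lower bound (via Proposition \ref{mmetd}) and the verification of the constructions.

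For the lower bound I would first run a twin argument: two distinct vertices in the same part are resolved only by themselves (distance $0$ versus $2$), so a mixed resolving set $M$ must contain at least $r-1$ vertices of $A$ and at least $t-1$ of $B$, giving $\beta_M(K_{r,t})\ge r+t-2$ in all cases. In the exceptional case $r=2$ (the case $t=2$ being symmetric) I would sharpen this using Proposition \ref{mmetd}. The twin argument forces $1\le|M\cap A|\le 2$. If $|M\cap A|=2$ then $|M|\ge 2+(t-1)=t+1$; if instead $M\cap A=\{a_1\}$ (up to relabelling), then applying $W_{b_ja_1}\cap M\neq\emptyset$ to each edge $a_1b_j$ gives $(\{b_j\}\cup\{a_2\})\cap M\neq\emptyset$, and since $a_2\notin M$ this forces $b_j\in M$ for every $j$, so $B\subseteq M$ and again $|M|\ge t+1$.

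For the upper bound I would exhibit explicit sets meeting these bounds: when $r,t\ge 3$ the set $M=(A\setminus\{a_r\})\cup(B\setminus\{b_t\})$ of size $r+t-2$, and when $r=2$ the set $M=A\cup(B\setminus\{b_t\})$ of size $r+t-1$. In each case I would check the mixed resolving property by splitting item pairs into vertex--vertex, edge--edge and vertex--edge pairs. Vertices in different parts are resolved by essentially any vertex of $M$ (distance $2$ versus $1$), and same-part pairs are resolved because $M$ omits at most one vertex from each part. Two distinct edges $a_ib_j$ and $a_kb_l$ are resolved by any vertex lying in the symmetric difference of their endpoint sets, which is nonempty and meets $M$ since $M$ misses at most one vertex of each part.

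The main obstacle is the vertex--edge pairs, and within them the case of a vertex $v$ and an incident edge $e=vu$: the non-incident case is immediate because then $v$ itself (if in $M$, or any other vertex of its part) sees the vertex at distance $0$ but the edge at distance $\ge 1$, whereas for an incident edge $d(v,w)$ and $d(e,w)=\min\{d(v,w),d(u,w)\}$ agree unless $w$ is strictly closer to $u$, i.e.\ $w\in W_{uv}$. Thus resolving $v$ from $e$ amounts exactly to $M\cap W_{uv}\neq\emptyset$, and this is where the constructions must be tested against the sets $W_{a_ib_j},W_{b_ja_i}$ computed at the start: for $r,t\ge 3$ each such $W$-set still meets $M$ after deleting one vertex from each part (using $r-1\ge 2$ and $t-1\ge 2$), while for $r=2$ keeping all of $A$ in $M$ is precisely what guarantees $M\cap W_{b_ja_i}\neq\emptyset$. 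Combining the lower bounds with these verified constructions yields the stated values, and I expect the only delicate bookkeeping to be this incident vertex--edge check --- which is exactly the obstruction that makes a size-$(r+t-2)$ set fail when $r=2$.
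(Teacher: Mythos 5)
Your proof is correct, but there is nothing in the paper to compare it against line by line: Proposition \ref{mbip} is stated without proof, imported verbatim from \cite{mdim1}, so the paper's ``route'' is simply citation. Your argument is a sound, self-contained re-derivation. The twin argument correctly gives $\beta_M(K_{r,t}) \ge r+t-2$ in all cases; your sharpening for $r=2$ is right, since with $M \cap A = \{a_1\}$ the requirement $W_{b_ja_1} \cap M \neq \emptyset$ with $W_{b_ja_1} = \{b_j\} \cup (A \setminus \{a_1\}) = \{b_j, a_2\}$ and $a_2 \notin M$ does force $B \subseteq M$, yielding $|M| \ge t+1 = r+t-1$; and both constructions verify (the only delicate pairs are, as you say, a vertex $v$ with an incident edge $vu$, which reduce exactly to $M \cap W_{uv} \neq \emptyset$, and your sets meet every such $W$-set precisely because $r-1 \ge 2$ and $t-1 \ge 2$ in the generic case, respectively $A \subseteq M$ when $r = 2$). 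What is worth noting is how closely your proof dovetails with the machinery this paper does develop: the sets $W_{a_ib_j} = \{a_i\} \cup (B \setminus \{b_j\})$ and $W_{b_ja_i} = \{b_j\} \cup (A \setminus \{a_i\})$ you compute are the same ones the paper computes for $K_{2,m}$ in the proof of Proposition \ref{hitst}(iv); your observation that resolving a vertex from an incident edge is equivalent to hitting $W_{uv}$ is precisely the content (and the proof idea) of Proposition \ref{mmetd} from \cite{mil21}; and your $r=2$ lower bound is in effect the statement that a mixed resolving set of $K_{2,t}$ must be a hitting set of the family $\{W_{uv}, W_{vu}\}$, i.e.\ the bound $\beta_M \ge mhs_<$ of Corollary \ref{mmetd2}, pushed one step further by the twin condition. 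So you have independently reproved the cited result of \cite{mdim1} using only tools already present in this paper, which is a nice consistency check but goes beyond what the paper itself supplies.
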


\begin{dfn} \label{mngd} Let $v$ be a vertex of graph $G$. A vertex $u \in N(v)$ is called a maximal neighbour of $v$
if $N[v] \subseteq N[u]$. $G$ is a maximal neighbour graph if each vertex $v \in V(G)$ has a maximal neighbour.
\end{dfn}

\begin{thm} \label{mngt} \mbox{\rm(\cite{mdim1})} Let $G$ be any graph of order $n$. Then $\beta_M(G) = n$ if and only if 
$G$ is a maximal neighbour graph. \end{thm}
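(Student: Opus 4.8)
The plan is to turn the global equality $\beta_M(G)=n$ into a condition that can be checked one vertex at a time, and then to match that condition with the existence of a maximal neighbour. Since $\beta_M(G)\le n$ always holds (Proposition \ref{mpath}) and any superset of a mixed resolving set is again mixed resolving, $\beta_M(G)=n$ holds if and only if $V(G)\setminus\{v\}$ fails to be a mixed resolving set for every $v$, equivalently if and only if every vertex $v$ lies in \emph{every} mixed resolving set of $G$. A vertex $v$ is forced into every mixed resolving set exactly when there is a pair of items $a,b\in V(G)\cup E(G)$ with $d(a,v)\ne d(b,v)$ but $d(a,w)=d(b,w)$ for all $w\ne v$; I will call such a pair \emph{resolved only by $v$}. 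Thus the theorem reduces to the single claim that $v$ is the unique resolver of some pair if and only if $v$ has a maximal neighbour.

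The computational core I would isolate as a lemma: for an edge $uv\in E(G)$, the vertex $u$ is a maximal neighbour of $v$ if and only if $W_{vu}=\{v\}$ (with $W_{vu}$ as in Definition \ref{wset}). For the forward implication, $N[v]\subseteq N[u]$ gives $d(u,w)\le d(v,w)$ for every $w\ne v$, because the first vertex $w_1$ on a shortest $v$--$w$ path lies in $N(v)\subseteq N[u]$, so $d(u,w)\le 1+(d(v,w)-1)$; hence no $w\ne v$ satisfies $d(v,w)<d(u,w)$, while $v$ itself does, so $W_{vu}=\{v\}$. Conversely, $W_{vu}=\{v\}$ means $d(u,w)\le d(v,w)$ for all $w\ne v$; applied to $w\in N(v)\setminus\{v\}$ this forces $d(u,w)\le 1$, i.e. $w\in N[u]$, and together with $v\in N[u]$ (as $uv\in E(G)$) it yields $N[v]\subseteq N[u]$.

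Next I would run the case analysis on a pair $a,b$ resolved only by $v$. If $a,b$ are two distinct vertices, taking as test vertex whichever of them differs from $v$ forces the distance between two distinct vertices to vanish, which is impossible. If $a,b$ are two distinct edges, then $v$ must be an endpoint of at least one of them (otherwise each endpoint of one edge is an endpoint of the other, forcing the edges to coincide); writing such an edge as $va$ and testing $w=a$ makes $a$ an endpoint of the other edge, which thus equals $ab$ with $b\notin\{v,a\}$, and testing $w=b$ then makes $b$ an endpoint of $va$, a contradiction. Hence $a$ is a vertex and $b=pq$ an edge; testing $w=p$ and $w=q$ forces exactly one endpoint, say $q$, to equal $v$ and the vertex $a$ to equal $p$. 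Since a vertex $w$ resolves the pair $(p,\,pv)$ precisely when $d(v,w)<d(p,w)$, i.e. when $w\in W_{vp}$, this pair being resolved only by $v$ says exactly $W_{vp}=\{v\}$, which by the lemma means $p$ is a maximal neighbour of $v$.

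Putting the pieces together: $v$ is the unique resolver of some pair if and only if $W_{vp}=\{v\}$ for some $p\in N(v)$, i.e. if and only if $v$ has a maximal neighbour (for the sufficiency, a maximal neighbour $u$ of $v$ gives $W_{vu}=\{v\}$ by the lemma, so the pair $(u,\,uv)$ is resolved only by $v$). Combined with the reduction of the first paragraph, $\beta_M(G)=n$ if and only if every vertex has a maximal neighbour, which is precisely the definition of a maximal neighbour graph (Definition \ref{mngd}). I expect the main obstacle to be the two-edge case, where one must carefully account for every way $v$ may or may not sit among the four endpoints; the distance inequalities in the lemma are the other delicate point, while the vertex--edge case carries all the real content and maps cleanly onto the characterization $W_{vp}=\{v\}$.
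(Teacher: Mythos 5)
Your proof is correct, and it holds up at the delicate spots (both directions of the lemma and the edge--edge case analysis). There is, however, no proof in the paper to compare it with: Theorem \ref{mngt} is stated as a quoted result from \cite{mdim1}, with no argument given. The closest in-paper material is the proof of Theorem \ref{mngt2}, the analogous characterization of $mhs_<(G)=n$: its ($\Leftarrow$) direction carries out exactly the forward implication of your key lemma, showing that a maximal neighbour $u$ of $v$ forces $W_{vu}=\{v\}$. Your proposal strengthens this to an equivalence ($u$ is a maximal neighbour of $v$ if and only if $W_{vu}=\{v\}$) and supplies the pieces the paper never writes down: the reduction of $\beta_M(G)=n$ to ``every vertex is the unique resolver of some pair,'' and the case analysis showing that the only pairs a single vertex $v$ can uniquely resolve are those of the form $(p,\,pv)$ with $p\in N(v)$, whose set of resolvers is precisely $W_{vp}$; this is essentially a reconstruction of the original argument of \cite{mdim1}. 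One structural payoff of your route: the paper proves the ($\Rightarrow$) direction of its own Theorem \ref{mngt2} by invoking the cited Theorem \ref{mngt} via Corollary \ref{mmetd2}, so it is not self-contained there, whereas your two-way lemma yields both theorems directly --- $mhs_<(G)=n$ holds exactly when, for every $v$, some member of the family $\{W_{uv}, W_{vu} \mid uv \in E(G)\}$ equals $\{v\}$, and such a member must be $W_{vu}$ for some $u\in N(v)$, which by your lemma is the maximal neighbour condition.
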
 

The following definition introduces a new graph invariant which will be analyzed in this paper.

\begin{dfn} \label{hsle} Value $mhs_{\le}(G)$ is defined to be the minimal cardinality of hitting sets of family $\{\overline{W_{uv}}, \overline{W_{vu}} | uv \in E(G) \}$.\end{dfn}

The paper also deals with extremal differences of graph invariants which are defined as follows.

\begin{dfn} \label{dif1} Let $\xi_1(G)$ and $\xi_2(G)$ be two graph invariants. Then extremal difference $(\xi_1-\xi_2)(n)$ is defined as the maximal value of difference 
$\xi_1(G)-\xi_2(G)$ for all connected graphs $G$ of order $n$. 
\end{dfn}

It should be noted that the minimal value of difference $\xi_1(G)-\xi_2(G)$ is equal to $-(\xi_2-\xi_1)(n)$.
Also, only non-trivial extremal differences are in cases when $n \ge 3$, since path $P_2$ is only connected graph of order 2, 
so it holds $(\xi_1-\xi_2)(2) = \xi_1(P_2)-\xi_2(P_2)$.

The paper is organized as follows. In Section 2 it is shown that $mhs_{\le}(G)$ is a new lower bound for $\psi(G)$.
Exact values of the new invariant are obtained for some special classes of graphs. Also, some tight bounds for $mhs_{\le}(G)$
in general case are derived. Section 3 provides the exact values of extremal differences between invariants 
$\psi(G)$ and $mhs_{\le}(G)$, $mhs_<(G)$ and $mhs_{\le}(G)$, $\beta_M(G)$ and $mhs_<(G)$, as well as 
some upper and lower bounds for extremal difference between $\psi(G)$ and $\beta_E(G)$. 
Section 4 gives concluding remarks about obtained results and gives some
directions for future work. 

\section{A new lower bound for $\psi(G)$}

The following theorem and its corollary give a new lower bound for $\psi(G)$.

\begin{thm} \label{doub} Let $G$ be a connected graph, $uv\in E(G)$ an arbitrary edge and $S$ a doubly resolving set of $G$. Then
 \begin{itemize}
  \item [(i)] $\overline{W_{uv}}\cap S\neq \emptyset;$
   \item [(ii)] $\overline{W_{vu}}\cap S\neq \emptyset$.
  \end{itemize}
\end{thm}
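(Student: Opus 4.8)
The plan is to prove (i) by contradiction and then recover (ii) by interchanging the roles of $u$ and $v$. So suppose, for the sake of contradiction, that $\overline{W_{uv}}\cap S = \emptyset$. By the definition of $\overline{W_{uv}}$ this means that every $w \in S$ satisfies $d(u,w) < d(v,w)$; equivalently, $S \subseteq W_{uv}$.

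The key observation is that adjacency collapses the distance gap to exactly $1$. Since $uv \in E(G)$ we have $d(u,v)=1$, and the triangle inequality yields $|d(u,w)-d(v,w)| \le d(u,v) = 1$ for every vertex $w \in V(G)$. Hence, for any $w \in W_{uv}$, the strict inequality $d(u,w) < d(v,w)$ forces $d(v,w) = d(u,w)+1$. In particular, under our assumption every $w \in S$ satisfies $d(v,w) - d(u,w) = 1$.

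Now I apply the doubly resolving property to the distinct vertices $u$ and $v$. Because $S$ is a doubly resolving set of $G$, there exist $x,y \in S$ such that $d(u,x)-d(u,y) \ne d(v,x)-d(v,y)$. But substituting $d(v,x) = d(u,x)+1$ and $d(v,y) = d(u,y)+1$ gives $d(v,x)-d(v,y) = \big(d(u,x)+1\big)-\big(d(u,y)+1\big) = d(u,x)-d(u,y)$, which contradicts the inequality just obtained. Therefore $\overline{W_{uv}}\cap S \ne \emptyset$, establishing (i). Exchanging $u$ and $v$ throughout (so that one instead assumes $S \subseteq W_{vu}$ and derives $d(u,w)-d(v,w)=1$ on $S$) gives $\overline{W_{vu}}\cap S \ne \emptyset$, which is (ii).

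I expect the argument to be essentially routine, paralleling the proof of Proposition \ref{mmetd}; there is no substantial obstacle. The only point that needs care is the observation in the second paragraph: for an edge $uv$ the inequality defining membership in $W_{uv}$ automatically pins down the difference $d(v,w)-d(u,w)$ to the exact value $1$, and it is precisely this rigidity that makes the two distance differences coincide and thereby contradicts the double resolution of the pair $u,v$.
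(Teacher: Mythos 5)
Your proposal is correct and follows essentially the same route as the paper's proof: assume $\overline{W_{uv}}\cap S=\emptyset$, use the adjacency of $u$ and $v$ to pin down $d(v,w)-d(u,w)=1$ for all $w\in S$, and conclude that no pair of vertices of $S$ doubly resolves $u$ and $v$, a contradiction. Your version merely makes two steps more explicit than the paper does — the triangle-inequality justification of $d(v,w)=d(u,w)+1$ and the substitution showing the two distance differences coincide — which is fine.
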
 
\begin{proof} Let us suppose that the statement of Theorem \ref{doub} does not hold, i.e. there exists an edge $uv \in E(G)$
such that\\
$\overline{W_{uv}}\cap S = \emptyset$ or $\overline{W_{vu}}\cap S = \emptyset$. \\
In the first case it follows that $d(u,w) < d(v,w)$ for each $w \in S$. Since $uv \in E(G)$ it follows that $d(v,w) = d(u,w)+1$,
i.e. $d(v,w)-d(u,w) = 1$ for each $w \in S$. This implies that vertices $u$ and $v$ are not doubly resolved by any pair of vertices from $S$, 
which is a contradiction.\\
In the case when $\overline{W_{vu}}\cap S = \emptyset$ a similar contradiction can be derived.
\end{proof}

Theorem \ref{doub} implies that any doubly resolving set $S$ of $G$ is a hitting set of family $\{\overline{W_{uv}}, \overline{W_{vu}} | uv \in E(G) \}$.
Since $mhs_{\le}(G)$ is the minimal cardinality of hitting sets of this family, the following corollary of Theorem \ref{doub} holds.

\begin{cor} \label{doub2} For every connected graph $G$ it holds $\psi(G) \ge mhs_{\le}(G)$
\end{cor}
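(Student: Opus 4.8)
The plan is to derive the inequality directly from Theorem~\ref{doub}, by exactly the same mechanism that yields Corollary~\ref{mmetd2} from Proposition~\ref{mmetd}. First I would fix a \emph{minimum} doubly resolving set $S$ of $G$, so that by the definition of the doubly metric dimension $|S| = \psi(G)$. The goal is then to show that this particular set $S$ already qualifies as a hitting set of the family $\{\overline{W_{uv}}, \overline{W_{vu}} \mid uv \in E(G)\}$ appearing in Definition~\ref{hsle}.

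To establish this, I would invoke Theorem~\ref{doub} for every edge $uv \in E(G)$: parts (i) and (ii) guarantee $\overline{W_{uv}} \cap S \neq \emptyset$ and $\overline{W_{vu}} \cap S \neq \emptyset$, respectively. Since the family in Definition~\ref{hsle} consists precisely of the sets $\overline{W_{uv}}$ and $\overline{W_{vu}}$ as $uv$ ranges over all edges, this says exactly that $S$ meets every member of the family, i.e.\ $S$ is a hitting set of it.

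Finally, $mhs_{\le}(G)$ is defined as the minimal cardinality over all such hitting sets. Because $S$ is one admissible hitting set, its cardinality cannot fall below that minimum, which gives $\psi(G) = |S| \ge mhs_{\le}(G)$, as required.

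The argument is essentially immediate once Theorem~\ref{doub} is available, so I do not expect a genuine obstacle here; the step carrying the whole content is the quantifier switch — reading the per-edge conclusions of Theorem~\ref{doub} uniformly across all edges as a single hitting-set condition. The only point deserving a word of care is the tacit requirement in Definition~\ref{hsle} that the sets to be hit cover the ground set, so that one is solving a legitimate hitting-set instance; this is supplied by the remark following Definition~\ref{wset}, namely $\overline{W_{uv}} \cup \overline{W_{vu}} = V(G)$.
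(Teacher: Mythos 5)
Your proof is correct and matches the paper's argument exactly: both deduce from Theorem~\ref{doub} that any doubly resolving set is a hitting set of the family $\{\overline{W_{uv}}, \overline{W_{vu}} \mid uv \in E(G)\}$, and then compare the minimum cardinalities. Your extra remark that $\overline{W_{uv}} \cup \overline{W_{vu}} = V(G)$ ensures the family is a legitimate hitting-set instance is a nice touch the paper leaves implicit.
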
 

In the following Proposition \ref{hitst} exact values of $mhs_{\le}(G)$ and $mhs_<(G)$ for some special types of graphs are derived.

The obtained values are used in Lemma \ref{hitl1} to prove that certain lower and upper bounds of $mhs_{\le}(G)$ and $mhs_<(G)$
for arbitrary connected graph $G$ are tight.

\begin{pro} \label{hitst} 
\begin{itemize}
\item [(i)] For path $P_n$, $n \ge 2$, it holds that \\ $mhs_{\le}(P_n) = mhs_<(P_n) = 2$; 
\item [(ii)] For star $S_n$, $n \ge 3$, it holds that $mhs_{\le}(S_n) = mhs_<(S_n) = n-1$;
\item [(iii)] For complete graph $K_n$, $n \ge 3$, it holds that $mhs_{\le}(K_n) = 2, mhs_<(K_n) = n$.
\item [(iv)] For complete bipartite graph $K_{2,m}$, $m \ge 2$, it holds that $mhs_{\le}(K_{2,m}) =  mhs_<(K_{2,m}) = 2$.
\end{itemize}
\end{pro}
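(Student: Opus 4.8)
The plan is to handle all four families by the same two-step recipe: for every edge first write down the sets $W_{uv}$, $W_{vu}$ together with their complements $\overline{W_{uv}}$, $\overline{W_{vu}}$, and then read off the minimal cardinality of a hitting set of the resulting family. The lower bound of $2$ is always available from Property \ref{mhsg2}, so the work consists of either exhibiting an explicit hitting set matching it or establishing a larger forced lower bound.

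A single observation disposes of the equalities $mhs_{\le}=mhs_<$ in (i), (ii) and (iv). For any edge $uv$ we have $\overline{W_{uv}} = W_{vu}\cup {}_u{W_v}$, where ${}_u{W_v}$ is the set of vertices equidistant from $u$ and $v$. Hence, whenever no vertex is equidistant from the endpoints of any edge, i.e. ${}_u{W_v}=\emptyset$ for all $uv\in E(G)$, we get $\overline{W_{uv}}=W_{vu}$ and $\overline{W_{vu}}=W_{uv}$, so the families in Definitions \ref{hsl} and \ref{hsle} literally coincide and therefore $mhs_{\le}(G)=mhs_<(G)$. I would check this no-equidistant condition for $P_n$ (a parity argument: adjacent vertices of a path are never equidistant from a third vertex), for $S_n$, and for $K_{2,m}$ (short distance computations), which immediately yields the stated equalities.

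It then remains to compute the actual value in each case. For $P_n$ with vertices $1,\dots,n$ and $d(i,w)=|i-w|$, the edge $i(i{+}1)$ gives $W_{i(i+1)}=\{1,\dots,i\}$ and $W_{(i+1)i}=\{i{+}1,\dots,n\}$; the pair of leaves $\{1,n\}$ hits every such set, matching the lower bound of $2$. For $S_n$ with centre $c$ and leaves $v_1,\dots,v_{n-1}$, a direct computation gives $W_{v_i c}=\{v_i\}$ and $W_{c v_i}=V\setminus\{v_i\}$; the singletons force every leaf into any hitting set, so $|H|\ge n-1$, while the set of all leaves is a hitting set of size $n-1$. For $K_{2,m}$ with parts $\{a_1,a_2\}$ and $\{b_1,\dots,b_m\}$, using $d(a_i,b_j)=1$ and $d(a_1,a_2)=d(b_k,b_l)=2$ one finds $W_{a_i b_j}=\{a_i\}\cup\{b_k : k\ne j\}$ and $W_{b_j a_i}=\{a_{3-i},b_j\}$; since $\{a_1,a_2\}$ meets each of these, the minimum is again $2$.

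The complete graph $K_n$ is the one case where the no-equidistant condition fails, and this failure is precisely what forces the two invariants apart: for an edge $uv$ every other vertex is equidistant from $u$ and $v$, so $W_{uv}=\{u\}$ while $\overline{W_{uv}}=V\setminus\{u\}$. For $mhs_<$ the family is the collection of all singletons $\{u\}$, which forces every vertex into the hitting set and gives $mhs_<(K_n)=n$; for $mhs_{\le}$ the family consists of the co-singletons $V\setminus\{u\}$, and any two-element set meets all of them, so $mhs_{\le}(K_n)=2$. I expect the difficulty here to be bookkeeping rather than conceptual: the enumeration of the $W$-sets for $K_{2,m}$ is mildly asymmetric because the side $\{a_1,a_2\}$ has only two vertices, and one must be careful that the ``forcing'' arguments behind the lower bounds $n-1$ (for $S_n$) and $n$ (for $K_n$) genuinely cover all edges.
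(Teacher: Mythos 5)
Your overall route is essentially the paper's: compute the $W$-sets edge by edge, observe that in cases (i), (ii), (iv) the complements coincide with the swapped $W$-sets so the two families in Definitions \ref{hsl} and \ref{hsle} are identical, and then settle each value by a forcing argument plus an explicit hitting set. Your organizing principle --- $\overline{W_{uv}} = W_{vu} \cup {}_u{W_v}$, so that ${}_u{W_v}=\emptyset$ for every edge forces the two families to coincide --- is a clean abstraction of what the paper verifies graph by graph, and all of your $W$-set computations, the forcing arguments ($|H|\ge n-1$ for $S_n$, $mhs_<(K_n)=n$), and the exhibited hitting sets are correct.

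There is, however, one genuine gap, and it sits exactly at the one place where your organizing principle does not apply. You assert that ``the lower bound of $2$ is always available from Property \ref{mhsg2},'' but that property concerns $mhs_<$ only, and the general inequality runs the wrong way: $mhs_{\le}(G) \le mhs_<(G)$ (this is Lemma \ref{hitl1}(i) of the paper), so a lower bound on $mhs_<$ never transfers to $mhs_{\le}$ by itself. In cases (i), (ii), (iv) the transfer is legitimate precisely because the two families coincide; but for $K_n$ --- the one case where ${}_u{W_v}\neq\emptyset$ --- they do not, and your conclusion ``any two-element set meets all of them, so $mhs_{\le}(K_n)=2$'' establishes only $mhs_{\le}(K_n)\le 2$. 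You still must rule out a singleton hitting set: for any $x\in V(K_n)$ and any edge $xv$, we have $d(x,x)=0<1=d(v,x)$, so $x\notin \overline{W_{xv}}$, hence $\{x\}$ misses a member of the family and $mhs_{\le}(K_n)\ge 2$. This is the one-line argument the paper supplies (``a set consisting of one vertex can not be a hitting set\dots'') and that your write-up omits; with it added, your proof is complete.
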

\begin{proof}(i) Let $H$ be a hitting set of family $\{ W_{uv}, W_{vu} | uv \in E(P_n)\}$. Since 
$V(P_n) = \{ v_i  |  1 \le i \le n \}$ and  $E(P_n) =  \{ v_iv_{i+1}  |  1 \le i \le n-1 \}$, it is easy to see that
$W_{v_iv_{i+1}} = \{ v_j  |  1 \le j \le i \} $ and $W_{v_{i+1}v_i} = \{ v_j  |  i+1 \le j \le n \} $ implying 
$\overline{W_{v_iv_{i+1}}} = W_{v_{i+1}v_i}$ and $\overline{W_{v_{i+1}v_i}} = W_{v_iv_{i+1}}$. 
Therefore $H$ is also a hitting set of family $\{ \overline{W_{uv}}, \overline{W_{vu}} | uv \in E(P_n)\}$.
Since $W_{v_1v_2} = \{ v_1 \}$ and $W_{v_1v_2} \cap S \neq \emptyset$ 
it holds that $v_1 \in H$. In the same way, since $W_{v_nv_{n-1}} = \{ v_n \}$ and $W_{v_nv_{n-1}} \cap H \neq \emptyset$ 
it holds that $v_n \in H$. Therefore, $|H| \ge 2$. Since $v_1 \in W_{v_iv_{i+1}}$ and $v_n \in W_{v_{i+1}v_i}$,
$i=1,...,n-1$ the hitting set of the minimal cardinality is exactly $H = \{v_1, v_n \}$ and
$mhs_<(P_n) = mhs_{\le}(P_n) = 2$. \\
 (ii) Star $S_n$ is defined by $V(S_n) = \{ v_i  |  1 \le i \le n \}$,  $E(S_n) =  \{ v_1v_i  |  2 \le i \le n \}$. It is easy to see that
$W_{v_iv_1} = \{v_i\}$ and $W_{v_1v_i} = V(S_n) \setminus \{v_i\}$ for each $i=2,...,n$. Therefore, a hitting set $H$ of family 
$\{W_{v_iv_1}, W_{v_1v_i} | 2 \le i \le n \}$ satisfies $\{v_2, ..., v_n\} \subseteq H$. Since $W_{v_1v_i} \bigcap \{v_2, ..., v_n\} \ne \emptyset$ and
$W_{v_iv_1} \bigcap \{v_2, ..., v_n\} \ne \emptyset$ for $i=2,...,n$ it follows that $\{v_2, ..., v_n\}$ is a hitting set of the minimal cardinality, i.e.
$mhs_<(S_n) = n-1$. Furthermore, since $\overline{W_{v_iv_1}} = W_{v_1v_i}$ and $\overline{W_{v_1v_i}} = W_{v_iv_1}$ for $i=2,...,n$
it follows that $\{v_2, ..., v_n\}$ is also a hitting set of family $\{ \overline{W_{v_iv_1}}, \overline{W_{v_1v_i}} | 2 \le i \le n \}$ with the minimal cardinality and hence 
$mhs_{\le}(S_n) = n-1$. \\
(iii) It is easy to check that $W_{uv} = \{u\}$, $W_{vu} = \{v\}$ for each $uv \in E(K_n)$. Therefore $V(K_n)$ is a hitting set of family 
$\{W_{uv}, W_{vu} | uv \in E(K_n)\}$ with the minimal cardinality and $mhs_<(K_n) = n$. \\
Also,  $\overline{W_{uv}} = V(K_n) \setminus \{u\}$ and $\overline{W_{vu}} = V(K_n) \setminus \{v\}$, for each $uv \in E(K_n)$.
As $u \notin \overline{W_{uv}}$ and $v \notin \overline{W_{vu}}$, a set consisting of one vertex can not be a hitting set of family
$\{\overline{W_{uv}}, \overline{W_{vu}} | uv \in E(K_n)\}$, i.e.  $mhs_{\le}(K_n) \ge 2$. \\
Let $a,b \in V(K_n)$, $a \ne b$, be two arbitrrary distinct vertices. Then for each $uv \in E(K_n)$ there are three possible cases:\\
Case 1: If $u \ne a$ and $v \ne a$ then $a \in \overline{W_{uv}}$ and $a \in \overline{W_{vu}}$; \\
Case 2: If $u = a$ then $a \notin \overline{W_{av}}$ and $a \in \overline{W_{va}}$; \\
Case 3: If $v = a$ then $a \in \overline{W_{ua}}$ and $a \notin \overline{W_{au}}$. \\
However, $b \in \overline{W_{av}}$ and $b \in \overline{W_{au}}$. Therefore, $\{a,b\}$
is a hitting set of family $\{\overline{W_{uv}}, \overline{W_{vu}} | uv \in E(K_n)\}$ with 
the minimal cardinality and hence $mhs_{\le}(K_n) = 2$. \\
(iv) Complete bipartite graph $K_{2,m}$ is defined by $V(K_{2,m}) = V_1 \bigcup V_2$,  $V_1 = \{u_1,u_2\}$, $V_2 = \{v_i |1 \le i \le m\}$ and
$E(K_{2,m}) = \{u_iv_j|1 \le i \le 2, 1 \le j \le m\}$. From Property \ref{mhsg2} it follows that $mhs_<(K_{2,m}) \ge 2$. 
Next we prove that set $\{u_1,u_2\}$ is a hitting set of family $\{ W_{u_iv_j}, W_{v_ju_i} | 1 \le i \le 2, 1 \le j \le m \}$. \\
As $d(a,b)=1$ if $a \in V_1$ and $b \in V_2$ and $d(a,b)=2$ for $a,b \in V_1$ or $a,b \in V_2$
it follows that for each edge $u_iv_j \in E(K_{2,m})$ we have $W_{u_iv_j} = \{u_i\} \bigcup (V_2 \setminus \{v_j\})$,
$W_{v_ju_i} = \{v_j\} \bigcup (V_1 \setminus \{u_i\})$. Then $u_1 \in W_{u_1v_j}$,  $u_1 \notin W_{v_ju_1}$,
 $u_2 \in W_{u_2v_j}$,  $u_2 \notin W_{v_ju_2}$ for each $j=1,...,m$. However, $u_1 \in W_{v_ju_2}$ and 
 $u_2 \in W_{v_ju_1}$. Therefore, $\{u_1,u_2\}$ is a hitting set of the minimal cardinality and $mhs_<(K_{2,m}) = 2$. \\
As $\overline{W_{u_iv_j}} = W_{v_ju_i}$ and $\overline{W_{v_ju_i}} = W_{u_iv_j}$ for $i=1,2$ and $j=1,...,m$
then $\{u_1,u_2\}$ is a hitting set of family $\{\overline{W_{u_iv_j}}, \overline{W_{v_ju_i}} | 1 \le i \le 2, 1 \le j \le m \}$ with
the minimal cardinality and hence $mhs_{\le}(K_{2,m}) = 2$.
\end{proof}

According to Corollary \ref{psit}, Corollary \ref{doub2} and Proposition \ref{hitst}, new invariant $mhs_{\le}(G)$ represents a tight
lower bound for doubly metric dimension $\psi(G)$ as $\psi(P_n) = mhs_{\le}(P_n) = 2$. 

Next lemma gives tight lower and upper bounds for $mhs_{\le}(G)$ and $mhs_<(G)$
for arbitrary connected graph $G$.

\begin{lem} \label{hitl1} For $n \ge 3$ and graph $G$ of order $n$ it holds:
\begin{itemize}
\item [(i)] $mhs_{\le}(G) \le  mhs_<(G)$ 
\item [(ii)] $2 \le mhs_{\le}(G) \le  n-1$
\item [(iii)] $mhs_<(G) \le  n$
\end{itemize}
\end{lem}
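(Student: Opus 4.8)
The plan is to treat the three inequalities separately, exploiting the elementary set inclusions between the strict and non-strict neighbourhood sets together with the chain of bounds already established in the paper linking $mhs_{\le}(G)$ to $\psi(G)$. The underlying arithmetic observation, valid for every edge $uv$, is that $W_{uv}\subseteq\overline{W_{vu}}$ and $W_{vu}\subseteq\overline{W_{uv}}$, since a strict inequality $d(u,w)<d(v,w)$ certainly implies $d(u,w)\le d(v,w)$.

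For part (i) I would take a minimum hitting set $H$ of the family $\{W_{uv},W_{vu}\mid uv\in E(G)\}$, so that $|H|=mhs_<(G)$, and show it is automatically a hitting set of $\{\overline{W_{uv}},\overline{W_{vu}}\mid uv\in E(G)\}$. Indeed, from $H\cap W_{vu}\ne\emptyset$ and $W_{vu}\subseteq\overline{W_{uv}}$ one gets $H\cap\overline{W_{uv}}\ne\emptyset$, and symmetrically $H\cap W_{uv}\ne\emptyset$ together with $W_{uv}\subseteq\overline{W_{vu}}$ gives $H\cap\overline{W_{vu}}\ne\emptyset$. Hence $mhs_{\le}(G)\le|H|=mhs_<(G)$. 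Part (iii) is immediate: each $W_{uv}$ contains $u$ (because $d(u,u)=0<1=d(v,u)$) and each $W_{vu}$ contains $v$, so the whole vertex set $V(G)$ meets every member of the family and is a hitting set, whence $mhs_<(G)\le n$. For the upper bound in (ii) I would not argue directly but invoke results already in the paper: Corollary \ref{doub2} gives $mhs_{\le}(G)\le\psi(G)$, and Proposition \ref{dou1a} gives $\psi(G)\le n-1$, so the two chain to $mhs_{\le}(G)\le n-1$.

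The only point requiring genuine care is the lower bound $mhs_{\le}(G)\ge 2$ in (ii). One cannot transfer Property \ref{mhsg2} through part (i), since that inequality points the wrong way, and one cannot copy the single-edge argument of Property \ref{mhsg2} either, because $\overline{W_{uv}}$ and $\overline{W_{vu}}$ need not be disjoint (they share the equidistant vertices $_u{W_v}$). Instead I would rule out singleton hitting sets directly: for an arbitrary vertex $w$, connectivity and $n\ge 3$ provide a neighbour $x$, and for the incident edge $wx$ one checks $w\notin\overline{W_{wx}}$, since $d(w,w)=0<1=d(x,w)$. Thus $\{w\}$ fails to meet $\overline{W_{wx}}$, so no single vertex is a hitting set and $mhs_{\le}(G)\ge 2$. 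Finally, tightness of all three bounds, though not part of the inequalities themselves, follows at once from Proposition \ref{hitst}: paths realise the value $2$, stars realise $n-1$, complete graphs realise $n$, and these examples also force equality in (i).
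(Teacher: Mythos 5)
Your proposal is correct and follows essentially the same route as the paper: the inclusions $W_{vu}\subseteq\overline{W_{uv}}$, $W_{uv}\subseteq\overline{W_{vu}}$ for (i), ruling out singleton hitting sets via an incident edge (using connectivity) for the lower bound in (ii), chaining Corollary \ref{doub2} with Proposition \ref{dou1a} for the upper bound in (ii), and observing that $V(G)$ hits every $W_{uv}$ for (iii). Your explicit remark that $\overline{W_{uv}}$ and $\overline{W_{vu}}$ need not be disjoint, so Property \ref{mhsg2} cannot simply be transferred, is a nice clarification but does not change the argument.
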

\begin{proof}
(i) As $W_{vu} \subseteq \overline{W_{uv}}$ and $W_{uv} \subseteq \overline{W_{vu}}$ for each $uv \in E(G)$ it follows that $mhs_{\le}(G) \le  mhs_<(G)$  
This inequality is tight since by Propostion \ref{hitst} it holds $mhs_{\le}(P_n) = mhs_<(P_n) = 2$; \\
(ii) Let us prove that $mhs_{\le}(G) > 1$. If we suppose that $mhs_{\le}(G) = 1$ then there exist vertex $x \in V(G)$ such that $\{ x \}$ is a hitting set of family 
$\{\overline{W_{uv}}, \overline{W_{vu}} | uv \in E(G)\}$, i.e. $x \in \overline{W_{uv}}$ and $x \in \overline{W_{vu}}$ for each $uv \in E(G)$. As $G$ 
is connected there exists vertex $a \in V(G)$ such that $ax \in E(G)$. Since $\overline{W_{ax}} = \{w \in V(G) | d(a,w) \ge d(x,w)\}$ and
$\overline{W_{xa}} = \{w \in V(G) | d(x,w) \ge d(a,w)\}$ it follows that $x \in \overline{W_{ax}}$ but $x \notin \overline{W_{xa}}$,
which is a contradiction. Therefore, $mhs_{\le}(G) \ge 2$. This lower bound is tight since by Propostion \ref{hitst} it can be achieved for path $P_n$
or complete graph $K_n$.\\
From Corollary \ref{doub2} it is known that $mhs_{\le}(G) \le \psi(G)$, while by Propostion \ref{dou1a} from \cite{cac07} it is known that $2 \le \psi(G) \le n-1$,
implying that $mhs_{\le}(G) \le n-1$. By Propostion \ref{hitst} this upper bound is reached for star $S_n$. \\
(iii) As $W_{uv}$ and $W_{vu}$ are subsets of $V(G)$ for each $uv \in E(G)$, and $G$ is of order $n$, it follows $mhs_<(G) \le n$. 
By Propostion \ref{hitst} this upper bound is reached for complete graph $K_n$. 
\end{proof}

Is should be noted that lower bound $mhs_<(G) \ge 2$ from Property \ref{mhsg2} is tight for e.g. path $P_n$.

\section{Extremal differences}

The first task in this section is to find extremal diferences between doubly metric dimension
$\psi(G)$ and new invariant $mhs_{\le}(G)$. The complete answer is given in Theorem \ref{psimhs1}.

\begin{thm} \label{psimhs1} For $n \ge 3$ it holds 
\begin{itemize}
\item [(i)]  $(mhs_{\le} - \psi)(n) =0$
\item [(ii)] $(\psi - mhs_{\le})(n) = n-3$
\end{itemize}
\end{thm}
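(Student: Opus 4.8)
The plan is to handle the two parts separately, each by combining an inequality that bounds the difference from above with an explicit graph that attains the bound. For part (i), the sign is already fixed: Corollary~\ref{doub2} gives $\psi(G) \ge mhs_{\le}(G)$ for every connected graph $G$, hence $mhs_{\le}(G) - \psi(G) \le 0$ for all $G$ of order $n$, so the extremal difference $(mhs_{\le} - \psi)(n)$ is at most $0$. To see that $0$ is actually achieved, I would invoke the path $P_n$: by Corollary~\ref{psit} we have $\psi(P_n) = 2$, and by Proposition~\ref{hitst}(i) we have $mhs_{\le}(P_n) = 2$, so $mhs_{\le}(P_n) - \psi(P_n) = 0$. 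Since $P_n$ is a connected graph of order $n$, the maximum is exactly $0$, establishing (i).

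For part (ii), the upper bound comes from bounding each invariant at its own extreme. By Proposition~\ref{dou1a} we have $\psi(G) \le n-1$, and by Lemma~\ref{hitl1}(ii) we have $mhs_{\le}(G) \ge 2$. Subtracting these gives
\[
\psi(G) - mhs_{\le}(G) \le (n-1) - 2 = n-3
\]
for every connected graph $G$ of order $n \ge 3$, so $(\psi - mhs_{\le})(n) \le n-3$.

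To show the bound $n-3$ is attained, the natural candidate is the complete graph $K_n$, since it is exactly the graph that simultaneously pushes $\psi$ to its maximum and $mhs_{\le}$ to its minimum. For $n \ge 3$, Proposition~\ref{dou2a} gives $\psi(K_n) = \max\{2, n-1\} = n-1$, while Proposition~\ref{hitst}(iii) gives $mhs_{\le}(K_n) = 2$. Therefore $\psi(K_n) - mhs_{\le}(K_n) = (n-1) - 2 = n-3$, which matches the upper bound and forces $(\psi - mhs_{\le})(n) = n-3$.

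I do not expect any genuine obstacle here: both parts reduce to assembling inequalities already proved in the excerpt and exhibiting a witness graph. The only point requiring minor care is recognizing that the two defining extremes of the upper bound ($\psi = n-1$ and $mhs_{\le} = 2$) can be realized by a \emph{single} graph, rather than merely bounding the two invariants independently; the complete graph $K_n$ resolves this, and the computation of $\psi(K_n)$ and $mhs_{\le}(K_n)$ is already available from Proposition~\ref{dou2a} and Proposition~\ref{hitst}(iii).
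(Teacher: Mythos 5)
Your proposal is correct and follows essentially the same route as the paper: part (i) via Corollary~\ref{doub2} with the path $P_n$ (Corollary~\ref{psit} and Proposition~\ref{hitst}) as the witness, and part (ii) by combining Proposition~\ref{dou1a} with Lemma~\ref{hitl1} for the upper bound $n-3$ and using the complete graph $K_n$ (Propositions~\ref{dou2a} and~\ref{hitst}) to attain it. No gaps; the argument matches the paper's proof step for step.
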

\begin{proof}(i) For any connected graph $G$, from Corollary \ref{doub2} it follows that $mhs_{\le}(G) \le \psi(G)$,
i.e. $(mhs_{\le} - \psi)(n) \le 0$.
Since, by Corrolary \ref{psit} and Proposition \ref{hitst}, $\psi(P_n)=mhs_{\le}(P_n)=2$ it follows $(mhs_{\le} - \psi)(n) =0$.  \\
(ii) From Proposition \ref{dou1a} and Lemma \ref{hitl1} it is evident that for every graph $G$ it holds $\psi(G) - mhs_{\le}(G) \le n-3$.
By Proposition \ref{dou2a} and Proposition \ref{hitst} this upper bound is reached for complete graph $K_n$.
\end{proof}

Since $mhs_{\le}(G))$ is a lower bound for $\psi(G)$ and
$mhs_<(G)$ represents a lower bound for $\beta_M(G)$ 
it is interesting to find extremal differences between these lower bounds. 
Their exact values are given by Theorem \ref{hslel1}. 

\begin{thm} \label{hslel1} For $n \ge 3$ it holds 
\begin{itemize}
\item [(i)]  $(mhs_{\le} - mhs_<)(n) =0$
\item [(ii)] $(mhs_< - mhs_{\le})(n) = n-2$
\end{itemize}
\end{thm} 
\begin{proof} (i) By Lemma \ref{hitl1} it holds $(mhs_{\le} - mhs_<)(n) \le 0$. 
By Proposition \ref{hitst} this upper bound is reached for e.g. path $P_n$,
which implies \\ $(mhs_{\le} - mhs_<)(n) =0$.\\    
(ii) From Lemma \ref{hitl1} it follows $mhs_<(G) \le n$ and $mhs_{\le}(G) \ge 2$
which implies that $mhs_<(G) - mhs_{\le}(G) \le n-2$.
Acording to Proposition \ref{hitst} this upper bound is reached for complete graph $K_n$. 
\end{proof}

Since both invarinats $mhs_{\le}(G)$ and $mhs_<(G)$ are based
on the minimal hitting set problem and their definitions are similar,
it is expected that there exist classes of graphs for which they have 
the same value. However, Theorem \ref{hslel1} shows that values of these
invariants can drastically differ.   

In \cite{mil21} it is shown that $mhs_<(G)$ is a lower bound of $\beta_M(G)$.
Therefore, it is interesting to see what extremal differences of them are.
This task is solved in Theorem \ref{mdiml1}, which uses Theorem \ref{mngt2}
and Corollary \ref{mngt2a}.

\begin{thm} \label{mngt2} Let $G$ be any connected graph of order $n$. Then $mhs_<(G) = n$ if and only if 
$G$ is a maximal neighbour graph. \end{thm}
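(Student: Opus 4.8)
The plan is to translate the global condition $mhs_<(G)=n$ into a local, vertex-by-vertex criterion and then to identify that criterion with the maximal-neighbour property of Definition \ref{mngd}. The starting point is the observation that $V(G)$ is always a hitting set of $\{W_{uv},W_{vu}\mid uv\in E(G)\}$, since $u\in W_{uv}$ and $v\in W_{vu}$ for every edge $uv$; hence $mhs_<(G)\le n$ unconditionally. Consequently $mhs_<(G)=n$ holds exactly when no proper subset of $V(G)$ is a hitting set, and since any superset of a hitting set is again a hitting set, this is equivalent to: for every vertex $x$, the set $V(G)\setminus\{x\}$ fails to be a hitting set. The set $V(G)\setminus\{x\}$ misses some member of the family precisely when some $W_{uv}$ (or $W_{vu}$) equals the singleton $\{x\}$, because $W_{uv}\neq\emptyset$ forces $W_{uv}\subseteq\{x\}$ to mean $W_{uv}=\{x\}$.

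The crucial ingredient I would prove first is the characterization: for an edge $uv\in E(G)$ one has $W_{uv}=\{u\}$ if and only if $v$ is a maximal neighbour of $u$, i.e. $N[u]\subseteq N[v]$. Note that $u\in W_{uv}$ always, so a singleton $W_{uv}$ must equal $\{u\}$. For the ``if'' direction, assuming $N[u]\subseteq N[v]$ I would show $d(u,w)\ge d(v,w)$ for every $w\neq u$: when $w\in N(u)$ this is immediate, since $w\in N[v]$ gives $d(v,w)\le 1=d(u,w)$; and when $d(u,w)\ge 2$ I would take the first vertex $z$ after $u$ on a shortest $u$--$w$ path, observe $z\in N(u)\subseteq N[v]$, and conclude $d(v,w)\le d(v,z)+d(z,w)\le 1+(d(u,w)-1)=d(u,w)$. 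For the ``only if'' direction, $W_{uv}=\{u\}$ forces $d(u,w)\ge d(v,w)$ for all $w\neq u$; applied to $w\in N(u)$ this yields $d(v,w)\le 1$, so $w\in N[v]$, giving $N[u]\subseteq N[v]$.

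With this lemma in hand the equivalence falls out cleanly. Since $W_{uv}=\{x\}$ forces $x=u$ and, symmetrically, $W_{vu}=\{x\}$ forces $x=v$, the failure of $V(G)\setminus\{x\}$ to be a hitting set is equivalent to the existence of an edge $xy$ with $W_{xy}=\{x\}$, which by the lemma means exactly that $x$ possesses a maximal neighbour $y$. Therefore $mhs_<(G)=n$ holds if and only if every vertex $x$ has a maximal neighbour, which is precisely the statement that $G$ is a maximal neighbour graph.

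The main obstacle is the distance argument in the ``if'' direction of the key lemma --- controlling $d(v,w)$ for vertices $w$ far from $u$ --- which is handled by the single shortest-path-step bound above; everything else is routine bookkeeping about singletons and hitting sets. I would also remark that the result is the hitting-set analogue of Theorem \ref{mngt}, fully consistent with the inequality $mhs_<(G)\le\beta_M(G)$ of Corollary \ref{mmetd2}.
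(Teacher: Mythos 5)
Your proof is correct, and it takes a genuinely different route from the paper's, at least in one direction. The paper proves the forward implication ($mhs_<(G)=n \Rightarrow G$ is a maximal neighbour graph) indirectly, by chaining two external results: Corollary \ref{mmetd2} ($\beta_M(G)\ge mhs_<(G)$, so $mhs_<(G)=n$ forces $\beta_M(G)=n$) and then Theorem \ref{mngt} from \cite{mdim1} (the characterization of $\beta_M(G)=n$). Your proof instead stays entirely inside the hitting-set framework: you reduce $mhs_<(G)=n$ to the local statement ``for every $x$, some member of the family equals $\{x\}$'' via the monotonicity of hitting sets, and then prove the clean structural lemma that $W_{uv}=\{u\}$ if and only if $N[u]\subseteq N[v]$, in \emph{both} directions. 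The ``if'' direction of your lemma is essentially the same shortest-path-step argument the paper uses for its backward implication (the paper shows $W_{vu}=\{v\}$ when $u$ is a maximal neighbour of $v$); the ``only if'' direction is the new ingredient that lets you avoid the detour through mixed metric dimension. What your approach buys: the theorem becomes self-contained (independent of \cite{mdim1}), and the singleton characterization of $W$-sets is a reusable structural fact; with it, Corollary \ref{mngt2a} follows by combining your theorem with Theorem \ref{mngt}, rather than Theorem \ref{mngt} being an ingredient of the proof. What the paper's approach buys: a shorter forward direction, at the cost of depending on the machinery of $\beta_M$.
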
 
\begin{proof}
($\Rightarrow$) If $mhs_<(G) = n$ from Corrolary \ref{mmetd2} it follows $\beta_M(G) = n$ which by 
Theorem \ref{mngt} implies that $G$ is a maximal neighbour graph.\\
($\Leftarrow$)Let $G$ be a maximal neighbour graph and $v \in V(G)$ be an arbitrary vertex. Then there exists vertex $u \in N(v)$ such that $N(v) \subseteq N(u)$. 
Let us prove that $W_{vu} = \{w | d(v,w) < d(u,w)\} = \{v\}$. Let $z$ be an arbitrary vertex of $G$ different from $v$.
If $z \in N(v)$ then $d(v,z)=1=d(u,z)$ and hence $z \notin W_{vu}$. \\
If $z \in V(G) \setminus N(v)$ then $d(v,z)=1+d(x,z)$ where $x \in N(v)$. \\
If $x = u$ then $d(v,z)=1+d(u,z)$ and $z \notin W_{vu}$. \\
If $x \ne u$ then $d(v,z)=1+d(x,z)=d(u,z)$ and we again conclude that $z \notin W_{vu}$. \\
Since $W_{vu} = \{v\}$ for each $v \in V(G)$ the hitting set of the family \\ $\{W_{uv}, W_{vu} | uv \in E(G) \}$
has to contain all nodes from $V(G)$ and therefore $mhs_<(G)=n$.
\end{proof}

Theorem 1 and Theorem 5 imply the following corollary.

\begin{cor} \label{mngt2a} Let $G$ be any connected graph of order $n$. Then $mhs_<(G) = n$ if and only if 
$\beta_M(G) = n$. \end{cor} 

\begin{thm} \label{mdiml1} For $n \ge 3$ it holds 
\begin{itemize}
\item [(i)] $(mhs_< - \beta_M)(n) =0$
\item [(ii)] $(\beta_M - mhs_<)(n) = n-3$
\end{itemize}
\end{thm}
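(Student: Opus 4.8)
The plan is to prove each part by pairing a general upper bound on the relevant difference with an explicit family of graphs that attains it.

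For part (i), I would start from Corollary \ref{mmetd2}, which gives $\beta_M(G) \ge mhs_<(G)$ for every connected graph $G$; hence $mhs_<(G) - \beta_M(G) \le 0$ and so $(mhs_< - \beta_M)(n) \le 0$. To see that $0$ is attained, I would exhibit a graph of order $n$ on which the two invariants coincide. The path $P_n$ does this: by Proposition \ref{mpath} we have $\beta_M(P_n) = 2$, and by Proposition \ref{hitst} we have $mhs_<(P_n) = 2$, so the difference is $0$. This forces $(mhs_< - \beta_M)(n) = 0$.

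For part (ii), the naive estimate coming from $\beta_M(G) \le n$ (Proposition \ref{mpath}) and $mhs_<(G) \ge 2$ (Property \ref{mhsg2}) only gives $n-2$, so the real work is sharpening it to $n-3$. The crux, and the step I expect to be the main obstacle, is the observation that $\beta_M(G)$ can reach its maximal value $n$ \emph{only} when $mhs_<(G)$ also equals $n$: by Theorem \ref{mngt}, $\beta_M(G) = n$ iff $G$ is a maximal neighbour graph, and by Theorem \ref{mngt2} (equivalently Corollary \ref{mngt2a}) this same condition characterizes $mhs_<(G) = n$. I would therefore split into two cases. If $G$ is a maximal neighbour graph, then $\beta_M(G) = mhs_<(G) = n$, so the difference is $0 \le n-3$ for $n \ge 3$. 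If $G$ is not a maximal neighbour graph, then $\beta_M(G) \le n-1$, which together with $mhs_<(G) \ge 2$ gives $\beta_M(G) - mhs_<(G) \le n-3$. In both cases $(\beta_M - mhs_<)(n) \le n-3$; the point is that the extremal configuration for $\beta_M$ is exactly the configuration making $mhs_<$ maximal as well, so the two large values cancel and the genuine worst case sits one step lower, at $\beta_M(G) = n-1$.

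It then remains to attain the bound. For $n \ge 4$ I would take the complete bipartite graph $K_{2,\,n-2}$ (here $m = n-2 \ge 2$): Proposition \ref{mbip} with $r=2$ gives $\beta_M(K_{2,\,n-2}) = n-1$, while Proposition \ref{hitst} gives $mhs_<(K_{2,\,n-2}) = 2$, so the difference is exactly $n-3$. For the boundary case $n=3$ the target value is $n-3 = 0$, attained by $P_3$ (indeed by any connected graph of order $3$), since $\beta_M(P_3) = mhs_<(P_3) = 2$. Combining the upper bound with these witnessing graphs yields $(\beta_M - mhs_<)(n) = n-3$.
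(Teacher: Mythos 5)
Your proposal is correct and follows essentially the same route as the paper: part (i) via Corollary \ref{mmetd2} with $P_n$ as the extremal graph, and part (ii) via the case split on $\beta_M(G)=n$ versus $\beta_M(G)\le n-1$ using Corollary \ref{mngt2a}, with $K_{2,n-2}$ attaining the bound. Your explicit treatment of the boundary case $n=3$ (where Propositions \ref{mbip} and \ref{hitst}(iv) do not apply to $K_{2,1}$ and one falls back on $P_3$) is in fact slightly more careful than the paper's own argument.
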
 
\begin{proof} (i) For any connected graph $G$, from Corollary \ref{mmetd2} it follows that $mhs_<(G) - \beta_M(G) \le 0$.
Since, by Proposition \ref{mpath} and Proposition \ref{hitst}, $\beta_M(P_n)=mhs_<(P_n)=2$ it follows $(mhs_< - \beta_M)(n) =0$. \\
(ii) By Proposition \ref{mpath} it holds $\beta_M(G) \le n$ and by Lemma \ref{hitl1} it follows $mhs_<(G) \ge 2$.
Case 1. $\beta_M(G) = n$. \\
According to Corolary \ref{mngt2a} it holds $mhs_<(G) = n$ and consequently it is $\beta_M(G) - mhs_<(G) = 0$. \\
Case 2. $\beta_M(G) \le n-1$. \\
In this case it holds $\beta_M(G) - mhs_<(G) \le n-3$. \\
Since $n \ge 3$ in both cases it holds $\beta_M(G) - mhs_<(G) \le n-3$.
This upper bound is reached for complete bipartite graphs $K_{2,n-2}$
since by Proposition \ref{mbip} from \cite{mdim1} it is $\beta_M(K_{2,n-2}) = n-1$
and by Proposition \ref{hitst} it is $mhs_<(K_{2,n-2}) = 2$.
\end{proof}

Finally, it is interesting to find extremal diferences between
$\psi(G)$ and $\beta_E(G)$. 

For $n=3$ this task is completely resolved by Property \ref{dedge3}. 
For $n \ge 4$, this task is partially resolved 
by Theorem \ref{dedge}, which gives upper and lower bounds
for extremal difference between $\psi(G)$ and $\beta_E(G)$. 

\begin{prp} \label{dedge3} $(\psi - \beta_E)(3) = 1$ and $(\beta_E - \psi)(3) = 0$\end{prp}
\begin{proof} There exist only two connected graphs of order 3: path $P_n$ and cycle $C_n$.
Since $\beta_E(P_3)=1$ and $\beta_E(C_3)=\psi(P_3) = \psi(C_3)=2$ then $(\psi - \beta_E)(3) = 1$. 
\end{proof}

\begin{thm} \label{dedge} For $n \ge 4$ it holds $\lfloor \frac{n}{2} \rfloor - 1 \le (\psi - \beta_E)(n) \le n-3$\end{thm}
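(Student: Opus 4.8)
The plan is to establish the two inequalities separately, since the statement only asserts bounds (not an exact value) on the extremal difference.

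For the upper bound $(\psi - \beta_E)(n) \le n-3$, I would argue by a dichotomy on whether $G$ is a path. If $G = P_n$, then by Corollary \ref{psit} and Proposition \ref{edim1} we have $\psi(G) - \beta_E(G) = 2 - 1 = 1$, which is at most $n-3$ for $n \ge 4$. If $G$ is not a path, then the ``moreover'' part of Proposition \ref{edim1} forces $\beta_E(G) \ge 2$, while Proposition \ref{dou1a} gives $\psi(G) \le n-1$; hence $\psi(G) - \beta_E(G) \le (n-1) - 2 = n-3$. Taking the maximum over all connected $G$ of order $n$ yields the claimed upper bound. This half is routine once the non-path case is isolated, the only point being that $\beta_E=1$ is available exclusively to paths, which already have small $\psi$.

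For the lower bound $(\psi - \beta_E)(n) \ge \lfloor n/2 \rfloor - 1$, the idea is to exhibit a single connected graph $G$ of order $n$ with $\psi(G) - \beta_E(G) = \lfloor n/2 \rfloor - 1$. I would take $G$ to be a caterpillar: a spine $w_1 w_2 \cdots w_s$ together with one pendant vertex attached to each of a chosen subset of the internal spine vertices $w_2,\dots,w_{s-1}$. Since such a $G$ is a tree, Proposition \ref{tree} gives $\psi(G) = l(G)$, where the leaves are precisely the two spine ends $w_1, w_s$ together with the attached pendants. By tuning the number of pendants to the parity of $n$ (attaching a pendant to every internal vertex when $n$ is even, so that $n = 2s-2$ and $l(G) = s$, and to all but one internal vertex when $n$ is odd, so that $n = 2s-3$ and $l(G) = s-1$), one arranges $l(G) = \lfloor n/2 \rfloor + 1$ in both cases, hence $\psi(G) = \lfloor n/2 \rfloor + 1$.

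It then remains to show $\beta_E(G) = 2$, and the candidate edge resolving set is $\{w_1, w_s\}$. Using $d(w_1, w_i) = i-1$, $d(w_s, w_i) = s-i$ together with the analogous formulas for the pendants, the pair of distances to $\{w_1, w_s\}$ is $(i-1,\, s-i-1)$ for the spine edge $w_i w_{i+1}$ and $(i-1,\, s-i)$ for a pendant edge attached at $w_i$; I would check these are pairwise distinct over all edges, so that $\{w_1,w_s\}$ resolves $E(G)$ and $\beta_E(G) \le 2$. Since $G$ is not a path, Proposition \ref{edim1} gives $\beta_E(G) \ge 2$, whence $\beta_E(G) = 2$ and $\psi(G) - \beta_E(G) = \lfloor n/2 \rfloor - 1$. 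The distance computations are routine; the two delicate points are, first, fixing the caterpillar so that the leaf count is exactly $\lfloor n/2 \rfloor + 1$ for both parities (and confirming the small cases $n=4,5$ fit the pattern), and second, verifying that $\{w_1,w_s\}$ separates \emph{every} pair of edges — in particular that a spine edge and a pendant edge sharing the same $w_1$-distance are separated by their $w_s$-distance, which is exactly the unit shift between the two signature formulas. One must also ensure at most one pendant is placed per spine vertex, since twin pendants would be indistinguishable from $\{w_1,w_s\}$ and would push $\beta_E$ above $2$, destroying the bound.
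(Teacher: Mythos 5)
Your proof is correct. For the lower bound you have in fact rediscovered the paper's own extremal example: the tree $T'_n$ in the paper is precisely your caterpillar (a spine with one pendant attached to each internal spine vertex when $n$ is even, and to all but one of them when $n$ is odd), and the paper likewise obtains $\psi(T'_n)=\lfloor n/2\rfloor+1$ from Proposition \ref{tree}. The one difference there is that the paper simply cites earlier work for $\beta_E(T'_n)=2$ with the two spine ends as an edge metric basis, whereas you verify it directly via the signatures $(i-1,\,s-i-1)$ for spine edges and $(i-1,\,s-i)$ for pendant edges; that computation is sound (the two families are separated by the unit shift in the second coordinate, and first coordinates separate edges within each family), so your argument is self-contained where the paper's is not, and your remark about twin pendants is exactly why the construction attaches at most one pendant per spine vertex. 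For the upper bound your route genuinely differs: the paper splits on the maximal degree, handling $\Delta(G)=2$ (paths and cycles, which requires Proposition \ref{dcyc}) separately from $\Delta(G)\ge 3$, where it invokes the bound $\beta_E(G)\ge\lceil \log_2\Delta(G)\rceil\ge 2$ of Proposition \ref{edim2}; you instead split on whether $G$ is a path and use the characterization in Proposition \ref{edim1} that $\beta_E(G)=1$ if and only if $G$ is a path, so every non-path of order $n\ge 3$ has $\beta_E(G)\ge 2$ (implicitly using the trivial fact that $\beta_E(G)\ge 1$ for a connected graph with at least two edges, so ruling out the value $1$ indeed forces $\beta_E(G)\ge 2$). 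Your dichotomy is cleaner and needs fewer ingredients --- no cycle values and no logarithmic degree bound --- while the paper's degree-based case analysis mirrors the technique of the works it cites; both routes conclude with $\psi(G)-\beta_E(G)\le (n-1)-2=n-3$ via Proposition \ref{dou1a}.
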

\begin{proof} Let $m = \lfloor \frac{n}{2} \rfloor$ and let $T'_n$ be a tree with $V(T'_n) = \{v_1,...,v_n\}$ and 
$E(T'_n) = \{v_iv_{i+1}| 1 \le i \le n-m\} \bigcup \{v_iv_{n-m+i}|2 \le i \le m\}$ \cite{mdim3,mil23}. Figure \ref{ston} illustrates trees 
$T'_n$ for odd and even $n$. From Proposition \ref{tree} it follows that $\psi(T'_n) = l(T'_n) = m+1$. It is proved in \cite{mdim3,mil23}
that $\beta_E(T'_n)=2$ with edge metric base $\{v_1,v_{n-m+1}\}$. Therefore $\psi(T'_n) - \beta_E(T'_n)=m-1=\lfloor \frac{n}{2} \rfloor - 1$
implying $(\psi - \beta_E)(n) \ge \lfloor \frac{n}{2} \rfloor - 1$. Since for any graph $G$ it holds $\psi(G)\le n-1$ and $\beta_E(G) \ge 1$ it follows that 
$(\psi - \beta_E)(n) \le n-2$. \\
Furthermore, this bound can be improved using a similar argument as in \cite{mdim3,mil23}. \\
Since $G$ is connected graph of order $n \ge 3$ it follows that maximal degree $\Delta(G) \ge 2$. We consider two cases:\\
Case 1. $\Delta(G) = 2$. The only such graphs of order $n$, $n \ge 3$ are path $P_n$ and cycle $C_n$. Since by 
Corollary \ref{psit} it holds $\psi(P_n)=2$ and by Proposition \ref{edim1} from \cite{edim1} $\beta_E(P_n)=1$ it 
follows that $\psi(P_n)-\beta_E(P_n)=1$. Also, by Proposition \ref{dcyc} from \cite{cac07} and Proposition \ref{edim1}
from \cite{edim1} it follows that $\psi(C_n)-\beta_E(C_n) \le 1$. In both cases the difference for $n \ge 4$ is less or equal than $n-3$.\\
Case 2. $\Delta(G) \ge 3$. From Proposition \ref{edim2} (\cite{edim1}) $\beta_E(G) \ge \lceil log_2 \Delta(G) \rceil \ge \lceil log_2 3 \rceil = 2$.
As $\psi(G) \le n-1$ it follows $\psi(G) - \beta_E(G) \le n-3$.
\end{proof}

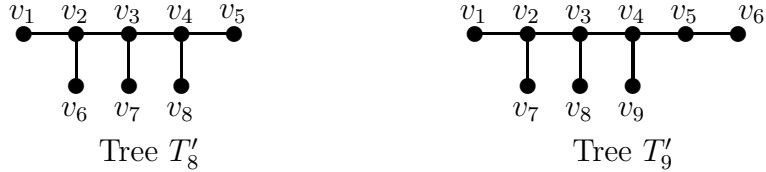
\begin{figure}[htbp]
\begin{center}
\centering\setlength\unitlength{1mm}
\begin{picture}(100,25)
\thicklines
\put(12,0){Tree $T'_8$}
\put(2,17){\circle*{2}}
\put(2,17){\line(1,0){7}}
\put(0,19){$v_1$}
\put(9,17){\circle*{2}}
\put(9,17){\line(1,0){7}}
\put(7,19){$v_2$}
\put(9,10){\circle*{2}}
\put(9,10){\line(0,1){7}}
\put(7,6){$v_6$}
\put(16,17){\circle*{2}}
\put(16,17){\line(1,0){7}}
\put(14,19){$v_3$}
\put(16,10){\circle*{2}}
\put(16,10){\line(0,1){7}}
\put(14,6){$v_7$}
\put(23,17){\circle*{2}}
\put(23,17){\line(1,0){7}}
\put(21,19){$v_4$}
\put(23,10){\circle*{2}}
\put(23,10){\line(0,1){7}}
\put(21,6){$v_8$}
\put(30,17){\circle*{2}}
\put(28,19){$v_5$}

\put(75,0){Tree $T'_9$}
\put(62,17){\circle*{2}}
\put(62,17){\line(1,0){7}}
\put(60,19){$v_1$}
\put(69,17){\circle*{2}}
\put(69,17){\line(1,0){7}}
\put(67,19){$v_2$}
\put(69,10){\circle*{2}}
\put(69,10){\line(0,1){7}}
\put(67,6){$v_7$}
\put(76,17){\circle*{2}}
\put(76,17){\line(1,0){7}}
\put(74,19){$v_3$}
\put(76,10){\circle*{2}}
\put(76,10){\line(0,1){7}}
\put(74,6){$v_8$}
\put(83,17){\circle*{2}}
\put(83,17){\line(1,0){7}}
\put(81,19){$v_4$}
\put(83,10){\circle*{2}}
\put(83,10){\line(0,1){7}}
\put(81,6){$v_9$}
\put(90,17){\circle*{2}}
\put(90,17){\line(1,0){7}}
\put(88,19){$v_5$}
\put(97,17){\circle*{2}}
\put(97,19){$v_6$}

\end{picture}
\caption{Trees $T'_8$ and $T'_9$.}
\label{ston}
\end{center}
\end{figure}

\section{Conclusions}

This paper defines a new graph invariant $mhs_{\le}(G)$, which is 
a new lower bound for $\psi(G)$.  
Exact values of the new invariant are obtained for some special classes
of graphs. Next, some tight bounds for the new invariant in general case
are derived. Finally, some extremal differences between 
several related invariants are obtained.

Direction of future work could be focused to find exact values
of the new invariant for some other interesting classes of graphs
and consider other extremal differences.

\section*{References}
\bibliographystyle{elsarticle-num}
 \bibliography{paper}

\begin{thebibliography}{10}
\expandafter\ifx\csname url\endcsname\relax
  \def\url#1{\texttt{#1}}\fi
\expandafter\ifx\csname urlprefix\endcsname\relax\def\urlprefix{URL }\fi
\expandafter\ifx\csname href\endcsname\relax
  \def\href#1#2{#2} \def\path#1{#1}\fi

\bibitem{metd1}
P.~J. Slater, Leaves of trees, Congr. Numer 14~(37) (1975) 549--559.

\bibitem{metd2}
F.~Harary, R.~Melter, On the metric dimension of a graph, Ars Combin 2~(1)
  (1976) 191--195.

\bibitem{metdnp}
S.~Khuller, B.~Raghavachari, A.~Rosenfeld, Landmarks in graphs, Discrete
  Applied Mathematics 70~(3) (1996) 217--229.

\bibitem{cac07}
J.~C\'aceres, C.~Hernando, M.~Mora, I.~Pelayo, M.~Puertas, C.~Seara, D.~Wood,
  On the metric dimension of cartesian products of graphs, SIAM Journal in
  Discrete Mathematics 21~(2) (2007) 423--441.

\bibitem{kra09}
J.~Kratica, M.~{\v{C}}angalovi{\'c}, V.~Kova{\v{c}}evi{\'c}-Vuj{\v{c}}i{\'c},
  Computing minimal doubly resolving sets of graphs, Computers \& Operations
  Research 36~(7) (2009) 2149--2159.

\bibitem{edim1}
A.~Kelenc, N.~Tratnik, I.~G. Yero, Uniquely identifying the edges of a graph:
  the edge metric dimension, Discrete Applied Mathematics 251 (2018) 204--220.

\bibitem{mdim1}
A.~Kelenc, D.~Kuziak, A.~Taranenko, I.~G. Yero, Mixed metric dimension of
  graphs, Applied Mathematics and Computation 314 (2017) 429--438.

\bibitem{bal09}
K.~Balakrishnan, M.~Changat, I.~Peterin, S.~{\v{S}}pacapan, P.~{\v{S}}parl,
  A.~R. Subhamathi, Strongly distance-balanced graphs and graph products,
  European Journal of Combinatorics 30~(5) (2009) 1048--1053.

\bibitem{hit}
D.~Simovici, C.~Reischer, Collections of sets and intractable problems in
  relational databases, Congressus Numerantium 59 (1970) 291.

\bibitem{np}
M.~R. Garey, D.~S. Johnson, Computers and intractability, Vol. 174, freeman San
  Francisco, 1979.

\bibitem{mil21}
M.~Milivojevi\'c~Danas, J.~Kratica, A.~Savi\'c, Z.~Maksimovi\'c, Some new
  general lower bounds for mixed metric dimension of graphs, Filomat 35 (2021)
  4275--4285.

\bibitem{mdim3}
M.~Milivojevi\'c-Danas, Mixed metric dimension of graphs, Ph.D. thesis,
  University of Kragujevac, Serbia, (in Serbian) (2022).

\bibitem{mil23}
M.~Milivojevi\'c-Danas, The difference between several metric dimension graph
  invariants, Discrete Applied Mathematics 332 (2023) 1--6.

\end{thebibliography}

\end{document}